\definecolor{CadetBlue}{cmyk}{0.62, 0.57, 0.23, 0 }
\definecolor{RoyalBlue}{cmyk}{1, 0.5, 0, 0 }
\definecolor{RedViolet}{cmyk}{0.07, 0.9, 0, 0.34 }
\definecolor{SeaGreen}{cmyk}{0.69, 0, 0.5, 0}
\definecolor{BrickRed}{cmyk}{0, 0.89, 0.94, 0.28}
\newcommand{\A}{\mathbb A}
\newcommand{\R}{\mathbb R}
\newcommand{\C}{\mathbb C}
\newcommand{\D}{\mathbb D}
\newcommand{\HP}{\mathbb H}
\newcommand{\N}{\mathbb N}
\newcommand{\PR}{\mathbb P}
\newcommand{\Q}{\mathbb Q}
\newcommand{\Z}{\mathbb Z}
\newcommand{\SO}{\hat{\mathbb S}}
\newcommand{\T}{\mathbb T}
\newcommand{\HSO}{\hat{\mathfrak{S}}}
\newtheorem{theo}{Theorem}
\newtheorem{prop}{Proposition}
\theoremstyle{definition}
\theoremstyle{remark}
\newtheorem{note}{Note}
\title[Notes on Nonlinear Number Fields]{Notes on Nonlinear Number Fields}
\author{T.M. Gendron \& A. Verjovsky}
\address{Instituto de Matem\'{a}ticas -- Unidad Cuernavaca, Universidad
Nacional Autonoma de M\'{e}xico, Av. Universidad S/N, C.P. 62210
Cuernavaca, Morelos, M\'{E}XICO}
\email{tim@matcuer.unam.mx, alberto@matcuer.unam.mx}
\date{1 July 2010}
\subjclass[2010]{Primary, 11R42, 11R32,  11R39}
\keywords{nonlinear number field, character field, adele class group, Dirichlet algebra, L-functions}
\begin{document}
\vspace{2cm} \maketitle
\begin{abstract} 
This is a guide to the construction of a nonlinear number field, which includes some new points not found in \cite{GV}.     \end{abstract}
\tableofcontents

\section{Introduction}

These notes are intended to serve as a guide to the construction of nonlinear number fields, providing new observations not covered in our article \cite{GV}.  

This effort has been occasioned by the discovery of some errors in the published version \cite{GVp}\footnote{Some of which were brought to our attention by M. McQuillan, who we thank.  See the Errata \cite{GVe}.}.   All of these errors have since been corrected in the revised version of our article \cite{GV}, in such a way that the original aims found in \cite{GVp} remain as they were.  Nevertheless, we felt that it would be useful to make available a condensed and clear account which could companion the more detailed \cite{GV} (and to which we refer for a number of proofs).

In addition, we have taken this opportunity to incorporate new remarks which reinforce the aims of the original paper and tie up the notion of a nonlinear number field with classical number theoretic constructs. 

There is not a lot in the way of new mathematics contained here; rather, it is the point of view which is novel.  If there is virtue to be found in the notion of a nonlinear number field, it comes from a rethinking and reorganization of certain number theoretic ideas under a unified heading.  

For the record, we highlight straightaway what we feel are the essential features of the nonlinear number field ${\sf N}[K]$ associated to an algebraic extension $K/\Q$.  

\vspace{3mm}

\noindent 1.  {\bf {\em Character Group as a Field} (\S 3).} Let $\SO_{K}$ be the adele class group of $K$, ${\rm Char}(\SO_{K}) $ its group of characters.  Although the isomorphism of Pontryagin duality 
$ (K,+)\cong {\rm Char}(\SO_{K}) $
has been known since the time of Tate's landmark thesis \cite{Ta}, it seems that
the device of pushing forward the product of $K$ to
${\rm Char}(\SO_{K})$ has not previously been exploited.  This is crucial as it allows us to view ${\sf N}[K]$ -- a space of projective classes of holomorphic functions on a hyperbolized adele class group
$\hat{\mathfrak{S}}_{K}$, equipped with the Cauchy and Dirichlet products  -- as a kind of generalized field extension of $K$.

\vspace{3mm}

\noindent 2.   {\bf {\em Idele Class Group as a Galois Group} (\S 7).}  The Galois group ${\rm Gal}(L/K)$ acts by projective-unitary  maps
on ${\sf N}[L]$ fixing $ {\sf N}[K]$.   For $K=\Q$ there are a pair of flows
on a closure $\bar{\sf N}[\Q^{\rm ab}]$ of ${\sf N}[\Q^{\rm ab}]$
 preserving the Cauchy resp.\ the Dirichlet products and acting trivially on $\Q$, which yield monomorphisms of the idele class group ${\sf C}_{\Q}$ into
${\sf Gal}_{\oplus}(\bar{\sf N}[\Q^{\rm ab}]/\Q)$
and 
${\sf Gal}_{\otimes}(\bar{\sf N}[\Q^{\rm ab}]/\Q)$, the ``decoupled Galois
groups'' of automorphisms preserving $\oplus$ resp.\ $\otimes$ only.  The Dirichlet flow on
$\bar{\sf N}(\N )\subset \bar{\sf N}(\Q )$ is reminiscent of Berry's hypothetical Riemann flow.

 \vspace{3mm}

\noindent 3. {\bf {\em L-functions and Modular Forms as Nonlinear Numbers} (\S 8).}
The hyperbolized adele class group $\hat{\mathfrak{S}}_{K}$ considered here makes it possible to view spaces of (Hilbert) modular forms and (multi) L-functions, with their algebra intact, as subspaces of 
${\sf N}[K]$.  In particular, the Riemann zeta function defines a nonlinear rational integer and the Hecke algebra is subsumed by the Dirichlet product algebra
supplemented by a projection.

\vspace{3mm}

\noindent 4.   {\bf {\em Actions by Representation Categories} (\S 9).}  There is an action of ${\rm Char}({\sf C}_{\Q})$ by automorphisms on the {\it aperiodic unit class group} $\mathfrak{D}^{\star}_{\rm ap}[\N]$: the group of Dirichlet units whose Fourier coefficients are multiplicative and indexed by $\N$, modulo units which are periodic with respect to the Dirichlet flow.   This gives an intrinsic manifestation of the dual idele class group on the ground nonlinear field ${\sf N}[\Q]$.  More generally, there are actions by (L-functions of) 
Galois representations and (L-functions of)
cuspidal automorphic representations by endomorphisms of $\mathfrak{D}^{\star}_{\rm ap}[\N]$.  In particular, 
there is a functor ${\rm Rep}({\rm Gal}(\bar{\Q}/\Q))\rightarrow {\rm End}(\mathfrak{D}^{\star}_{\rm ap}[\N])$ 
which, modulo a verified global Langlands correspondence, gives a representation of monoidal categories.

\vspace{3mm}

\section{Geometry of the Adele Class Group}

\vspace{4mm}

\begin{center}
\small{\bf 2.1 Places }
\end{center}

\vspace{2mm}

\noindent Let $K$ be a number field of degree $d$ over $\Q$ and $O_{K}$ its ring of
integers.  For any place $\nu$
denote by $K_{\nu}$ the corresponding completion, for which there is a canonical
embedding $K\hookrightarrow K_{\nu}$.  Each prime ideal $\mathfrak{p}\subset O_{K}$ defines a finite place.  There are $d=r+2s$ infinite places, where
$r$ = number of real places, $2s$ = the number of complex places.

\vspace{4mm}

\begin{center}
\small{\bf 2.2 Minkowski Torus (\cite{Ne}, Chapter I.5) }
\end{center}

\vspace{2mm}

\noindent  Let $\C_{K}=\C^{d}$, where we index the coordinates of vectors by the $d$ infinite places.  Define
\[ K_{\infty}=\{ (z_{\nu})\in\C_{K}|\; \bar{z}_{\nu}=z_{\bar{\nu}}\} \cong\R^{r}\times\C^{s}. \] 
An inner-product on $K_{\infty}$ is obtained by restriction of the usual hermitian inner product
on $\C_{K}$.  There is a canonical embedding $K\hookrightarrow K_{\infty}$ given
by the place embeddings.  The image of $O_{K}$ is a lattice and the quotient
\[  \T_{K} = K_{\infty}/O_{K} \]
is called the \begin{color}{RoyalBlue} {\bf {\em Minkowski torus}}\end{color}.

\vspace{4mm}

\begin{center}
\small{\bf 2.3 Adele Class Group (\cite{Ko}, Chapter 1.5.3)}
\end{center}

\vspace{2mm}

\noindent The ring of \begin{color}{RoyalBlue} {\bf {\em finite adeles}}\end{color} is the restricted
product
\[  \A_{K}^{\rm fin}= \sideset{}{'}\prod_{\nu\;\text{finite}}K_{\nu} ,\]
restricted along the non-archimedean local rings of integers $O_{\nu}$.  The ring of \begin{color}{RoyalBlue}{\bf {\em adeles}}\end{color}
is the product
$\A_{K}= K_{\infty}\times \A_{K}^{\rm fin} $.
There is a canonical inclusion $K\hookrightarrow \A_{K}$ given by the place embeddings, and its image is discrete and co-compact.
The quotient
\[ \SO_{K}=\A_{K}/(K,+)\]
is called the \begin{color}{RoyalBlue}{\bf {\em adele class group}}\end{color}.   
Note that $ \SO_{K}$ is a $K$-vector
space and $\SO_{\Q}\otimes_{\Q} K \cong \SO_{K}$.

\vspace{4mm}

\begin{center}
\small{\bf 2.4 Solenoid Geometry of Adele Class Group (\cite{Ko}, Chapter 1.5.3;
\cite{GV}, \S 3)}
\end{center}

\vspace{2mm}

\noindent Denote by $\T_{\mathfrak{a}}=K_{\infty}/\mathfrak{a}$ where $\mathfrak{a}\subset O_{K}$ is an ideal.  We have an inverse system of tori and
\[   \SO_{K} \cong \lim_{\longleftarrow}\T_{\mathfrak{a}}. \]
Thus $\SO_{K}$ is an abelian pro Lie group of dimension $d$.  
Alternatively, we have
\begin{equation}\label{twistprod}
 \SO_{K}\cong (K_{\infty}\times\hat{O}_{K}) /O_{K} 
 \end{equation}
where $\hat{O}_{K}= \underset{\longleftarrow}{\lim} O_{K}/\mathfrak{a}$ is the $\mathfrak{a}$-adic completion of $O_{K}$, and where $O_{K}$ acts on $K_{\infty}\times\hat{O}_{K}$
 by  $\alpha \cdot ({\bf z}, \hat{\gamma} )=({\bf z}+\alpha, \hat{\gamma} -\alpha)$.

The presentation (\ref{twistprod}) shows that $\SO_{K}$
is a Cantor bundle over $\T_{K}$ in which the fibers are cosets of
the the subgroup $\hat{O}_{K}\hookrightarrow \SO_{K}$.
It also shows that $\SO_{K}$ is a  $d$-dimensional solenoid\footnote{A {\it solenoid} is a lamination having compact, totally disconnected transversals.}
whose leaves are the cosets of the dense subgroup $K_{\infty}\hookrightarrow \SO_{K}$. 
The inner-product on $K_{\infty}$ extends to a leaf-wise riemannian metric
on $\SO_{K}$.

\vspace{3mm}

\begin{center}
\small{\bf 2.5. Parabolic Adele Class Group}
\end{center}

\vspace{2mm}

The tensor product $\hat{\C}_{K} = \SO_{K}\otimes_{\Q} \C$ defines a solenoid
with leaves isomorphic to $\C_{K}=K_{\infty}\otimes \C $ which we call the
 \begin{color}{RoyalBlue} {\bf {\em parabolic adele class group}}\end{color}.  As in the previous section, we have
that $\hat{\C}_{K}$ is isomorphic to $(\C_{K}\times\hat{O}_{K})/O_{K}$ and to 
$\underset{\longleftarrow}{\lim} \;\C_{K}/\mathfrak{a}$.  Note that $\SO_{K}\subset \hat{\C}_{K} $ canonically, where it plays the role of the real axis.

\vspace{4mm}

\begin{center}
\small{\bf 2.6 Trace Map and Diagonal Inclusion (\cite{GV}, \S 3).}
\end{center}

\vspace{2mm}

\noindent Let $L/K$ be a finite extension where $L,K$ are of finite degree over $\Q$. The trace map ${\rm Tr}_{L/K}:L_{\infty}\rightarrow K_{\infty}$ -- defined
 by ${\bf y} = (y_{\mu})\mapsto {\bf x} = (x_{\nu})$ where 
 $x_{\nu} = \sum_{\mu|\nu} y_{\mu}$ -- extends to an epimorphism
\[ {\rm Tr}_{L/K}:\SO_{L}\longrightarrow \SO_{K}. \]  

The diagonal inclusion $i_{L/K}: K_{\infty}\rightarrow L_{\infty}$ -- defined
by $ {\bf x} = (x_{\nu})\mapsto {\bf y} = (y_{\mu})$ where
$y_{\mu} = x_{\nu}$ if $\mu|\nu$ -- extends to an embedding
\[ i_{L/K}:\SO_{K}\hookrightarrow \SO_{L}.\]  
If $d$ is the degree of $L/K$ then the scaled embedding $(1/d)i_{L/K}$ is a section of ${\rm Tr}_{L/K}$.

If $L/K$ is Galois
then the Galois group ${\rm Gal}(L/K)$ acts by isometric isomorphisms on
$\SO_{L}$ which leave invariant the trace map and the diagonal inclusion:
${\rm Tr}_{L/K}\circ \sigma = {\rm Tr}_{L/K}$ and
$\sigma\circ i_{L/K} = i_{L/K}$
 for all $\sigma\in  {\rm Gal}(L/K)$.

\vspace{4mm}

\begin{center}
\small{\bf 2.7  Proto Adele Class Group (\cite{GV}, \S 5)}
\end{center}

\vspace{2mm}

 \noindent Let $\mathcal{K}=\underset{\longrightarrow}{\lim} \, K_{\lambda}$
be an infinite degree algebraic number field (a direct limit of finite degree extensions of $\Q$).  Then the trace maps
induce an inverse limit of adele class groups and one defines
the  \begin{color}{RoyalBlue} {\bf {\em proto adele class group}}\end{color} of $\mathcal{K}$ as
\[  \hat{\SO}_{\mathcal{K}} =\lim_{\longleftarrow}\SO_{K_{\lambda}}.\]
This is a compact connected abelian group of infinite dimension.  
It is also a solenoid whose leaves are cosets of the locally convex but not locally compact vector space
\[  \hat{\mathcal{K}}_{\infty} := \lim_{\longleftarrow} (K_{\lambda})_{\infty}\hookrightarrow  \hat{\SO}_{\mathcal{K}}. \] 
If we take $\mathcal{K}=\bar{\Q}$, the associated proto adele class group $\hat{\SO}_{\bar{\Q}}$ is universal in the sense that it projects onto every
(proto) adele class group.

\vspace{4mm}

\begin{center}
\small{\bf 2.8  Hilbertian Adele Class Group (\cite{GV}, \S 4)}
\end{center}

\vspace{2mm}

\noindent The direct limit by the diagonal inclusions $\underset{\longrightarrow}{\lim}(K_{\lambda})_{\infty}$ can be given an inner-product
by rescaling, for each $\lambda$, the inner-product of $(K_{\lambda})_{\infty}$ by $1/d_{\lambda}$, 
where $d_{\lambda}$ is the degree of $K_{\lambda}/\Q$.  The completion is a Hilbert space denoted
$\mathcal{K}_{\infty}$.  The ring of integers $O_{\mathcal{K}}$ acts by isometries on $\mathcal{K}_{\infty}$ and
so we may define the  \begin{color}{RoyalBlue} {\bf {\em hilbertian adele class group}}\end{color}
\[  \SO_{ \mathcal{K}} = (\mathcal{K}_{\infty}\times \hat{O}_{\mathcal{K}})/O_{\mathcal{K}} \]
where $\hat{O}_{\mathcal{K}}$ is the pro-completion with respect to the
ideal quotients.
Then $\SO_{ \mathcal{K}}$ is a hilbertian lamination, and there is an embedding
$\SO_{ \mathcal{K}} \hookrightarrow  \hat{\SO}_{\mathcal{K}}$ with dense image.  See Theorem 4 of \cite{GV}.

\begin{center}
\small{\bf 2.9  Foliations of Adele Class Groups}
\end{center}

\vspace{2mm}

\noindent  We remark that for any extension of algebraic number fields $L/K$, cosets of  
$ i_{L/K}(K_{\infty})$  ``foliate" $\SO_{L}$ by planes of dimension
= $[K:\Q]$.  This secondary foliated structure is invariant by the action of the Galois group
when $L/K$ is Galois.
In particular, every (proto) solenoid has a canonical foliation by lines corresponding
to cosets of the image of $\Q_{\infty}=\R$ by the diagonal embedding.

\pagebreak

\section{Character Field }

\vspace{4mm}

\begin{center}
\small{\bf 3.1 Standard Character}
\end{center}

\vspace{2mm}

\noindent Fix a number field $K/\Q$ of finite degree $d$.  We have a canonical 
``trace functional"
$  {\sf Tr}={\rm Tr}_{K/\Q}: K_{\infty}\longrightarrow \Q_{\infty}=\R $
given by
${\bf z}= (z_{\nu})\mapsto \sum z_{\nu}$.
Using this we define a character on $K_{\infty}$ by
\[  \psi_{\infty}^{K}({\bf z})= \exp (2\pi i  \, {\sf Tr}({\bf z})), \]
which extends continuously to
a character $\psi_{K}$ of
$\SO_{K}$.   We call this the  \begin{color}{RoyalBlue} {\bf {\em standard character}}\end{color}.  See \cite{GV}, proof of Theorem 5.
This choice of character is natural with respect to pull-back along trace maps: $\psi_{L}= \psi_{K}\circ {\rm Tr}_{L/K}$.

\vspace{4mm}

\begin{center}
\small{\bf 3.2 Another Description of the Standard Character (\cite{Bu}, Chapter 3.1)}
\end{center}

\vspace{2mm}

\noindent Alternatively, one can 
define $\psi_{K}$ on $\SO_{K}$ by pushing down the following character
of  $\A_{K}$ that is trivial on $K$:
\[ \psi_{\infty}^{K}\times \prod_{\nu \; \text{finite}} \psi^{K}_{\nu}.\]
Here, the local component  $\psi^{K}_{\nu}$ is defined like so.  Let
${\sf Tr}_{\nu}:K_{\nu}\rightarrow\Q_{p}$ be the local trace map.
Every element
of $\alpha_{\nu}\in K_{\nu}$ may be written in the form $q_{\nu}+o_{\nu}$ where ${\sf Tr}_{\nu}(q_{\nu})\in \Z [1/p]$ and ${\sf Tr}_{\nu}(o_{\nu})\in\Z_{p}$. Then one
defines 
\[ \psi^{K}_{\nu}(\alpha_{\nu}) := \exp (-2\pi i \, {\sf Tr}_{\nu}(q_{\nu})).\]
The induced character on $\SO_{K}$ is then equal to $\psi_{K}$.  See 
Exercise 3.1.2, pg. 277 of \cite{Bu}.

\vspace{4mm}

\begin{center}
\small{\bf 3.3 Character Field (\cite{GV}, \S 7)}
\end{center}

\vspace{2mm}

\noindent  \begin{theo}\label{chartheo}  There is a natural isomorphism of $(K,+)$ with the 
character group ${\rm Char}(\SO_{K})$.  Moreover, the product on $K$ may be
pushed forward to ${\rm Char}(\SO_{K})$ in a way which is natural with respect
to field extensions and pull-backs of characters along trace maps.  Thus 
${\rm Char}(\SO_{K})$ is in a natural way a {\rm field}.
\end{theo}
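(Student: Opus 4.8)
The plan is to realize the additive isomorphism through the self-duality of the adele ring $\A_{K}$ (Tate's thesis), and then to install the multiplication by transport of structure, checking that it is forced to be natural by the compatibility $\psi_{L}=\psi_{K}\circ {\rm Tr}_{L/K}$ recorded in \S 3.1. First I would fix the adelic pairing $\langle x,y\rangle = \psi_{K}(xy)$ built from the standard character $\psi_{K}$ of $\A_{K}$ of \S 3.2, which is trivial on $K$. Tate's self-duality asserts that $y\mapsto \langle\,\cdot\,,y\rangle$ is a topological isomorphism $\A_{K}\xrightarrow{\sim}{\rm Char}(\A_{K})$. Combining this with the standard duality for quotients, ${\rm Char}(\A_{K}/K)\cong K^{\perp}$, where $K^{\perp}=\{y\in\A_{K}:\psi_{K}(xy)=1\ \text{for all}\ x\in K\}$, I reduce the first assertion to the single computation $K^{\perp}=K$ inside $\A_{K}$.

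Next I would exhibit the isomorphism explicitly. Define $\Phi_{K}\colon (K,+)\to{\rm Char}(\SO_{K})$ by $\Phi_{K}(\alpha)=\chi_{\alpha}$, where $\chi_{\alpha}([x])=\psi_{K}(\alpha x)$. This is well defined on $\SO_{K}=\A_{K}/K$ because $\alpha K\subseteq K$ and $\psi_{K}$ is trivial on $K$; it is additive because $\psi_{K}$ is multiplicative in its adelic argument; and it is injective because a nonzero $\alpha\in K$ is an adelic unit, so $\alpha\A_{K}=\A_{K}$, whence $\chi_{\alpha}$ trivial would force $\psi_{K}$ itself to be trivial, a contradiction. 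The inclusion $K\subseteq K^{\perp}$ is immediate from $\psi_{K}|_{K}=1$ together with $K\cdot K\subseteq K$, and surjectivity of $\Phi_{K}$ is exactly the reverse inclusion $K^{\perp}\subseteq K$.

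I would then push the product forward by transport of structure, declaring $\chi_{\alpha}\ast\chi_{\beta}:=\chi_{\alpha\beta}$, so that $\Phi_{K}$ becomes a field isomorphism tautologically. To see that this multiplication is canonical rather than an artifact of the identification, I would verify that the square relating the field inclusion $K\hookrightarrow L$ to the pull-back ${\rm Tr}_{L/K}^{\ast}\colon {\rm Char}(\SO_{K})\to{\rm Char}(\SO_{L})$ commutes: for $\alpha\in K$ and $y\in\SO_{L}$,
\[ {\rm Tr}_{L/K}^{\ast}(\chi_{\alpha}^{K})(y)=\psi_{K}(\alpha\,{\rm Tr}_{L/K}(y))=\psi_{K}({\rm Tr}_{L/K}(\alpha y))=\psi_{L}(\alpha y)=\chi_{\alpha}^{L}(y), \]
using $K$-linearity of the trace for the middle equality and $\psi_{L}=\psi_{K}\circ{\rm Tr}_{L/K}$ for the next. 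Hence $\Phi_{L}\circ(K\hookrightarrow L)={\rm Tr}_{L/K}^{\ast}\circ\Phi_{K}$, and since the left vertical arrow is the field inclusion, the pushed-forward product is automatically compatible with field extensions and with trace pull-backs.

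The hard part will be the identity $K^{\perp}=K$, that is, the surjectivity of $\Phi_{K}$, asserting that $K$ is its own annihilator for the adelic pairing. One inclusion is formal; the reverse is the genuine arithmetic input. I would extract it from Tate's self-duality as follows: Pontryagin duality already forces $K^{\perp}$ to be discrete (as the dual of the compact $\SO_{K}$) and cocompact (via self-duality, since ${\rm Char}(K)$ is compact), so with $K\subseteq K^{\perp}$ the quotient $K^{\perp}/K$ is simultaneously discrete and compact, hence finite; the calibration of $\psi_{K}$ through the local traces of \S 3.2 -- equivalently, the matching of the local conductors with the different -- then pins this finite quotient down to be trivial. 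This normalization of the standard character is precisely the point at which the choices of \S 3.1 and \S 3.2 are used, and it is the crux of the whole argument.
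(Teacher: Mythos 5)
Your construction of the isomorphism, the transport of the product, and the naturality verification coincide with the paper's own argument: the paper defines exactly your map $\alpha\mapsto\psi^{K}_{\alpha}$, $\psi^{K}_{\alpha}(\hat{{\bf z}})=\psi_{K}(\alpha\hat{{\bf z}})$, pushes the product forward by transport of structure, and establishes naturality by the same computation ${\rm Tr}_{L/K}^{\ast}(\psi^{K}_{\alpha})=\psi^{K}_{\alpha}\circ{\rm Tr}_{L/K}=\psi^{L}_{\alpha}$. The difference is that the paper simply cites Tate (Theorem 4.1.4 of \cite{Ta}) for the additive isomorphism, whereas you attempt to prove it via self-duality of $\A_{K}$ and the identity $K^{\perp}=K$ --- and it is precisely there that your argument has a gap.

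Your reduction is sound up to the point where $K^{\perp}/K$ is discrete and compact, hence finite. But the step you yourself call the crux --- that this finite quotient is trivial --- is not proved: you attribute it to ``the calibration of $\psi_{K}$ through the local traces,'' i.e.\ to the matching of local conductors with the different, and that is not the mechanism. The standard finish (and the one behind Tate's theorem) is purely algebraic: since $K\cdot K\subseteq K$, the annihilator $K^{\perp}$ is stable under multiplication by elements of $K$, hence is a $K$-vector subspace of $\A_{K}$; therefore $K^{\perp}/K$ is a $K$-vector space, and a finite vector space over the infinite field $K$ must be zero. No property of the particular character enters: \emph{any} nontrivial character of $\A_{K}$ trivial on $K$ yields self-duality and $K^{\perp}=K$. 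The normalization of the standard character matters elsewhere --- it is what makes $\psi_{L}=\psi_{K}\circ{\rm Tr}_{L/K}$ hold, hence what gives the naturality clause, and the local conductor--different computation is what identifies ${\rm Char}(\T_{K})$ with the inverse different $\mathfrak{d}_{K}^{-1}$, as the paper notes after the theorem; neither is what forces $K^{\perp}/K$ to vanish. (One \emph{can} make the different enter honestly, via the volume argument $[K^{\perp}:K]={\rm vol}(\A_{K}/K)^{2}$ together with ${\rm vol}(\A_{K}/K)=1$ for the self-dual measure, but that requires actually carrying out the measure computation, which your sketch does not do.) With the one-line vector-space argument substituted for your final step, your proposal becomes a complete proof, and indeed a more self-contained one than the paper's citation.
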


The first statement is 
classical e.g. see \cite{Ta}, Theorem 4.1.4.  The isomorphism is defined by $\alpha\mapsto \psi_{\alpha}^{K}$
where $\psi_{\alpha}^{K}(\hat{{\bf z}})= \psi_{K}(\alpha\hat{{\bf z}})$. Note that
the expression $\alpha\hat{{\bf z}}$ makes sense since $\SO_{K}$ is a $K$-vector space.   Naturality is an expression of the commutative diagram
\[ 
\begin{CD}
L    @>\cong>>    {\rm Char}(\SO_{L})\\
\cup @.                 @AA{\rm Tr}_{L/K}^{\ast}A\\
K    @>\cong>>    {\rm Char}(\SO_{K})
\end{CD}
\]
whose commutativity follows from the fact that for all $\alpha\in K$,
\[ {\rm Tr}_{L/K}^{\ast} (\psi^{K}_{\alpha}) = \psi^{K}_{\alpha}\circ  {\rm Tr}_{L/K} =
\psi^{L}_{\alpha}.
  \]
(Dually, Pontryagin duality shows that the inclusion ${\rm Tr}_{L/K}^{\ast}:{\rm Char}(\SO_{K}) \hookrightarrow {\rm Char}(\SO_{L})$ induces the trace ${\rm Tr}_{L/K}:\SO_{L}\rightarrow \SO_{K}$.)

Due to this Theorem, we will refer to ${\rm Char}(\SO_{K})$ as the  \begin{color}{RoyalBlue} {\bf {\em character field}}\end{color} of $\SO_{K}$, and we will denote by $\oplus$ resp.\ $\otimes$ the operations on characters corresponding to $+$ resp.\ $\times$.  In this isomorphism, the character group of the
Minkowski torus ${\rm Char}(\T_{K})\subset {\rm Char}(\SO_{K})$ 
is identified with the inverse different $\mathfrak{d}_{K}^{-1}\supset O_{K}$,
an $O_{K}$-module extension of $O_{K}$.  See \cite{GV}, Theorem 5.

The naturality part of the Theorem implies that if
$\mathcal{K}=\underset{\longrightarrow}{\lim}K_{\lambda}$ is an infinite degree field
extension then
\[   {\rm Char}(\hat{\SO}_{\mathcal{K}})\cong \lim_{\longrightarrow}  
{\rm Char}(\SO_{K_{\lambda}})\cong\mathcal{K}, \]
and so is also a field.    The character field ${\rm Char}(\hat{\SO}_{\bar{\Q}})$ is thus available to play its expected role as universal object.

\section{Nonlinear Number Fields I: Field Algebras}

While our notation in this section will be 
that which we have reserved for finite field extensions, everything discussed here extends without modification to the case of an infinite degree algebraic extension $\mathcal{K}/\Q$.

\vspace{4mm}

\begin{center}
\small{\bf 4.1 Field Algebra (\cite{GV}, \S 9)}
\end{center}

\vspace{2mm}

\noindent Let $\C [K]$ be the  \begin{color}{RoyalBlue} {\bf {\em field algebra}}\end{color} associated to $K$.  By this we mean
the $\C$-vector space of formal polynomials 
\[ f=\sum_{\alpha\in K} a_{\alpha}\cdot \alpha\] with coefficients in $\C$ (where all but a finite number of the $a_{\alpha}$ are zero), equipped with
operations $\oplus$, $\otimes$ induced
from $+$, $\times$.
In order not to confuse the vector space identity $0$ with the additive identity in $K$, we denote by $1_{\oplus}$, $1_{\otimes}$ the identity for $+$, resp. $\times$.

If $f=\sum a_{\alpha}\cdot \alpha$ , $g=\sum b_{\alpha}\cdot \alpha$ belong to  $\C [K]$ then we have
\[  f\oplus g = \sum_{\alpha}\bigg(\sum_{\alpha=\gamma_{1}+\gamma_{2}} a_{\gamma_{1}}b_{\gamma_{2}}\bigg)\alpha, \quad\quad
 f\otimes g = \sum_{\alpha}\bigg(\sum_{\alpha=\gamma_{1} \gamma_{2}} a_{\gamma_{1}}b_{\gamma_{2}}\bigg)\alpha\]
 and we refer to these as the  \begin{color}{RoyalBlue} {\bf {\em Cauchy product}}\end{color} resp.\ 
  \begin{color}{RoyalBlue} {\bf {\em Dirichlet product}}\end{color}
 of $f$ and $g$.  We note that the Dirichlet product does not distribute over
 the Cauchy product.  There is a canonical monomorphism $K\hookrightarrow \C [K]$
 given by the monomials.
 
\vspace{4mm}

\begin{center}
\small{\bf 4.2 Pre-Nonlinear Number Field (\cite{GV}, \S 9)}
\end{center}

\vspace{2mm}

\noindent  Let ${\sf T}: \C [K]\longrightarrow \C$ be defined 
 $  {\sf T}(f)= \sum a_{\alpha}\in\C $.
 This map takes both of the operations  $\oplus$, $\otimes$ to the product in $\C$:
 \begin{equation}\label{trace} {\sf T}(f\oplus g)=  {\sf T}(f\otimes g) =  {\sf T}(f) {\sf T}(g).   
 \end{equation}
It follows that if we write ${\sf Z}[K]= {\rm Ker}({\sf T})$ then the {\it set-theoretic difference} of vector spaces
\[  \C [K] -{\sf Z}[K] \]
 is invariant with respect to both operations $\oplus$
and $\otimes$.  We can projectivize this space obtaining an {\it infinite dimensional
affine subspace }  ${\sf N}_{0}[K]\subset\PR\C [K]$ on which both
$\oplus$, $\otimes$ descend.  This space is called the  \begin{color}{RoyalBlue} {\bf {\em pre-nonlinear number field}}\end{color} of $K$.  It is a double semi-group and is the precursor
to the nonlinear number field ${\sf N}[K]$.

\pagebreak

\vspace{4mm}

\begin{center}
\small{\bf 4.3 Affine Structure of ${\sf N}_{0}[K]$}
\end{center}

\vspace{2mm}

\noindent 

\noindent Since ${\sf N}_{0}[K]$ is the complement of
a codimension 1 projective subspace (the projectivization $\PR{\sf Z}[K]$ of 
${\sf Z}[K]$),
it is an affine subspace.  In fact, it may be canonically identified with the affine hyperplane in $\C [K]$
\[ {\sf Z}_{1}[K]: = 
\{Êf |\; {\sf T}(f)=1 \} .\]  Explicitly if $f\in[f]\in {\sf N}_{0}[K]$ then the map $[f]\mapsto (1/{\sf T}(f))\cdot f$ identifies ${\sf N}_{0}[K]$ with
$ {\sf Z}_{1}[K]$.  Both Cauchy and Dirichlet multiplication restrict to ${\sf Z}_{1}[K]$ by equation
(\ref{trace}), and the identification ${\sf N}_{0}[K]\rightarrow {\sf Z}_{1}[K]$
is an isomorphism of double semigroups.
 Note that ${\sf Z}_{1}[K]$ contains the canonical basis 
$K$ of $ \C [K]$.

\vspace{4mm}

\begin{center}
\small{\bf 4.4 Vector Space Structure of ${\sf N}_{0}[K]$}
\end{center}

\vspace{2mm}

\noindent Choosing the point $1_{\oplus}\in {\sf Z}_{1}[K]$ as an origin,
${\sf Z}_{1}[K]$ becomes in the usual way a vector space.   
Explicitly, the sum and scalar product are \[   f \dot{+} g := f+g-1_{\oplus}, \quad  c\odot f := cf + (1-c)1_{\oplus}.\]
Note that
Cauchy resp.\ Dirichlet multiplication act affine linearly resp.\ linearly in ${\sf Z}_{1}[K]$.   Indeed, 
$\oplus$ affinely respects the sum:
\[ \big( h\oplus ( f \dot{+} g ) \big)\dot{+} h= h\oplus (  f+g-1_{\oplus} ) +h-1_{\oplus}= (h\oplus f) + (h\oplus g) - 1_{\oplus}=
(h\oplus f )\dot{+}  ( h\oplus g ).\]
And $\oplus$ affinely respects scalar multiplication:
\[ \big( h\oplus (c\odot f)\big)    \dot{+}   \big( (c-1)h +1_{\oplus}  \big)=   c\odot (h\oplus f) .  \]
The linearity of Dirichlet multiplication is likewise verified, using the fact that $f\otimes 1_{\oplus}=1_{\oplus}$ for all $f\in {\sf Z}_{1}[K]$.

\vspace{4mm}

\begin{center}
\small{\bf 4.5 Galois Action}
\end{center}

\vspace{2mm}

\noindent Let $L/K$ be an extension so that ${\sf N}_{0}[K]\subset 
{\sf N}_{0}[L]$.
If $L/K$ is Galois then $\sigma\in {\rm Gal}(L/K)$ defines
a field algebra automorphism of $\C [L]$ via 
\[ f\mapsto \sigma (f) = \sum a_{\alpha}\sigma (\alpha)=  \sum a_{\sigma^{-1}(\alpha)}\alpha .\]
Indeed,
\[  \sigma (f\oplus g ) =   
\sum_{\alpha}\bigg(\sum_{\sigma^{-1}(\alpha)=\gamma_{1}+
\gamma_{2}} a_{\gamma_{1}}b_{\gamma_{2}}\bigg)\alpha =
\sum_{\alpha}
\bigg(\sum_{\alpha=\gamma_{1}+\gamma_{2}} a_{\sigma^{-1}(\gamma_{1}) }b_{\sigma^{-1}(\gamma_{2})}\bigg)\alpha =
 \sigma (f)\oplus \sigma (g).  \]
One shows identically that $\sigma (f\otimes g )=\sigma (f)\otimes \sigma (g)$.   
Note further that  ${\rm Gal}(L/K)$ preserves ${\sf Z}_{1}[L]$ and
acts by automorphisms with respect to its vector space structure, since
$\sigma (1_{\oplus}) = 1_{\oplus}$.   By projectivization we induce an action of ${\rm Gal}(L/K)$
on ${\sf N}_{0}[L]$ preserving its double semi group and vector space structures.

\section{Nonlinear Number Fields II: Completions}

We extend the discussion of the previous
section to square summable functions, in order to introduce Hilbert space techniques 
and standard ideas of harmonic analysis.

\pagebreak

\vspace{4mm}

\begin{center}
\small{\bf 5.1 Puiseux Representation (\cite{GV}, \S 9)}
\end{center}

\vspace{2mm}

\noindent We shall henceforward interpret the elements of $\C[K]$ as functions
on the solenoid $\SO_{K}$ via Pontryagin duality: that
is, we replace $\alpha$ by the character $\psi_{\alpha}(\hat{\bf z})$
and write the element $f=\sum a_{\alpha}\alpha$ as
\[  f (\hat{\bf z}) = \sum a_{\alpha}  \psi_{\alpha}(\hat{\bf z}). \]
We will often consider $f$ on the dense leaf $K_{\infty}$ where
it takes the form of a $K$-Puiseux polynomial:
\[ f ({\bf z}) =\sum a_{\alpha} \exp (2\pi i {\sf Tr}(\alpha{\bf z})) = 
\sum a_{\alpha} \xi^{\alpha}, \quad \;\;\xi :=  \exp (2\pi i {\sf Tr}({\bf z}))  .\] 
This observation is interesting in its own right e.g. since Puiseux series are used to
parametrize complex singularities.   Notice that in the Puiseux representation, $1_{\otimes}=\xi$.

\begin{note} The space of continuous functions $C_{0}(\SO_{K},\C )$ on $\SO_{K}$ can be identified with the space of {\it limit periodic functions}. It is worth mentioning that by a Theorem of Pontryagin \cite{Be},  $\SO_{K}$ itself can be obtained as the convex hull of a certain limit periodic function of $K_{\infty}$.  The inclusion 
$\C [K]\subset C_{0}(\SO_{K},\C )$ is dense.   
\end{note}

\vspace{4mm}

\begin{center}
\small{\bf 5.2 $\;L^{2}$ Completion (\cite{GV}, \S 9)}
\end{center}

\vspace{2mm} 

\noindent 

Using the Haar probability measure of $\SO_{K}$, we may
define the usual $L^{p}$ norms on $C_{0}(\SO_{K},\C )$ and speak in particular of the completions
with respect to them, which will be denoted $L^{p}[K]$.
The completion $L^{1}[K]$ is the Fourier algebra of functions
on $\SO_{K}$.  One could also complete $\C [K]$ using the norm topology (to get a  $C^{\ast}$-algebra), or using a weak operator topology (to get a von Neumann algebra), however we shall not pursue these completions here.  
Our interest
will be to extend the construction of the previous section to $L^{2}[K]$.  

Note that by Fourier theory, we may identify $L^{2}[K]$ with $l^{2}[K]$ =  the space
of square summable functions $K\rightarrow\C$.  In addition, the field
$K$, identified with the monomials $\xi^{\alpha}$, forms an orthonormal
basis of $L^{2}[K]$.   Thus every element $f\in L^{2}[K]$ may be written as
an $L^{2}$ Puiseux series
\[ f(\xi )=\sum a_{\alpha} \xi^{\alpha}. \]
If $g(\xi )= \sum b_{\alpha} \xi^{\alpha}$, its inner-product 
with $f$ is given by Parseval's formula 
\[  \langle f, g\rangle = \sum_{\alpha} a_{\alpha}\bar{b}_{\alpha} . \]
When $\mathcal{K}$ is of infinite degree, we use the Haar probability
measure on the compact proto adele class group $ \hat{\SO}_{\mathcal{K}}$ to define
$L^{2}[\mathcal{K}]$.

\vspace{4mm}

\begin{center}
\small{\bf 5.3 Definition of the Cauchy and Dirichlet Products}
\end{center}

\vspace{2mm}

For any $f\in L^{2}[K]$ and $\alpha\in K$ denote by $S_{\alpha}f$ the function in $L^{2}[K]$ with Fourier series
\[ S_{\alpha}f = \sum_{\beta\in K} a_{\alpha - \beta}\xi^{\beta}. \]
Thus $ S_{\alpha}f $ is the bilateral shift by $\alpha$ of the function $f(\bar{\bf z})$.
For $f,g\in L^{2}[K]$ define
\[ c_{\alpha}= 
\sum_{\alpha=\alpha_{1}+\alpha_{2}} a_{\alpha_{1}}b_{\alpha_{2}} . \]
Then $c_{\alpha}= \langle f, S_{\alpha}\bar{g}\rangle$ (where 
$\bar{g}=\sum \bar{b}_{\beta}\xi^{\beta}$) and so is an unambiguously defined
complex number.
We then say that the Cauchy product $f\oplus g:=\sum c_{\alpha}\xi^{\alpha}$ is defined if the defining series belongs to
 $L^{2}[K]$.  

One similarly treats the Dirichlet product using 
\[ T_{\alpha}f = \sum_{\beta\in K^{\ast}} a_{\alpha\beta^{-1}}\xi^{\beta}. \]
Then for $\alpha\not=0$ 
\[ d_{\alpha}=  \sum_{\alpha=\alpha_{1}\alpha_{2}} 
a_{\alpha_{1}}b_{\alpha_{2}}  \] 
is again well-defined since it is equal to $\langle f, T_{\alpha}\bar{g}\rangle$.
Consider also the formal expression
\[   d_{0} = a_{0}\sum_{\alpha\in K^{\ast}} b_{\alpha}  + b_{0}\sum_{\alpha\in K^{\ast}} a_{\alpha} 
+a_{0}b_{0}.
 \]
 Then we say that the Dirichlet product $f\otimes g:=\sum d_{\alpha}\xi^{\alpha}$ is defined when $d_{0}$ converges absolutely and the defining sum belongs to $L^{2}[K]$.

 As in functional
analysis we view the maps
$    g\mapsto f\oplus  g, \; g\mapsto f\otimes  g$
as (possibly unbounded) linear operators 
\[ M_{\oplus}^{f}: {\sf Dom}_{\oplus}(f)\longrightarrow L^{2}[K],\quad
    M_{\otimes}^{f}: {\sf Dom}_{\otimes}(f) \longrightarrow L^{2}[K]\] 
whose domains are dense subspaces of 
$L^{2}[K]$ since they contain $\C[K]$.  
Note that for any $f\in \C [K]$, 
${\sf Dom}_{\oplus}(f)={\sf Dom}_{\otimes}(f)=L^{2}[K]$.

\vspace{4mm}

\begin{center}
\small{\bf 5.4 Nonlinear Number Field (\cite{GV}, \S 9)}
\end{center}

\vspace{2mm} 

\noindent

Let $f\in L^{2}[K]$.  Define ${\sf T}(f)$ to be the sum $\sum a_{q}$ provided that the latter is absolutely convergent, and otherwise, ${\sf T}(f)$ is not defined.  It follows from the definition of the Dirichlet product in \S 5.3 that the set of $f$ for which the trace is defined
is precisely ${\sf Dom}_{\otimes}(1_{\oplus})$.   Let 
\[ \widetilde{\sf Z}[K]=\{f\in {\sf Dom}_{\otimes}(1_{\oplus})\, |\; {\sf T}(f)=0\} .\]
Note that $\widetilde{\sf Z}[K]$ is not the $L^{2}$ closure of ${\sf Z}[K]$, which is all of $L^{2}[K]$
(see \S 5.5 below).
 One then
defines the  \begin{color}{RoyalBlue} {\bf {\em nonlinear number field}}\end{color} associated to $K$ as
\[  {\sf N}[K] : = \PR (L^{2}[K]- \widetilde{\sf Z}[K])\subset  \PR L^{2}[K]. \]
The operations of $\oplus$ and $\otimes$ induce partially
defined operations in ${\sf N}[K]$, confined to appropriate domains of definition.
We refer to such a structure as a  \begin{color}{RoyalBlue} {\bf {\em partial double semigroup}}\end{color}.  

\vspace{4mm}

\begin{center}
\small{\bf 5.5 Wienerian Nonlinear Number Field}
\end{center}

\vspace{2mm} 

Notice that ${\sf Dom}_{\otimes}(1_{\oplus})\subset l^{1}[K]\cap l^{2}[K]$ is the Banach space of absolutely convergent series,
equipped with the norm $\|f\| = \sum |a_{\alpha}|$.
The Cauchy and Dirichlet products are globally
defined in ${\sf Dom}_{\otimes}(1_{\oplus})$ since we have the bound of norms
\[ \|f\oplus g\|, \|f\otimes g\|\leq \sum |a_{\alpha}||b_{\beta}|<\infty . \]
(The latter series is a rearrangement of the terms of the Cauchy product of the absolutely convergent
series $\sum |a_{\alpha}|, \sum |b_{\beta}|$, hence is
absolutely convergent \cite{Kn}.)  Thus $W[K]:={\sf Dom}_{\otimes}(1_{\oplus})$ is the \begin{color}{RoyalBlue} {\bf {\em Wiener field algebra}}\end{color}.
The projectivization 
\[  {\sf W}[K] : = \PR (W[K]- \widetilde{\sf Z})  \]
is a full-fledged semi group, the \begin{color}{RoyalBlue} {\bf {\em Wienerian nonlinear number field}}\end{color} of $K$.  In this context the analogue of Wiener's Theorem \cite{New} holds: namely
if $f(\xi )\not=0$ for all $\xi$ then the function $1/f\in W[K]$ and is the Cauchy inverse of $f$.

Analogues of Wiener's Theorem for the Dirichlet product were proved in
\cite{HW}, \cite{GN}.  The result contained in \cite{GN} implies that if $f\in {\sf W}[\Q_{+}]$ has associated Dirichlet series $D_{f}(s) = \sum a_{q}q^{-s}$ which is uniformly bounded away from
zero for ${\rm Re}(s)>0$ then $f$ has a Dirichlet inverse.

\vspace{4mm}

\begin{center}
\small{\bf 5.6 Full Projectivization}
\end{center}

\vspace{2mm} 

It will also be useful to consider the full projectivization of $L^{2}[K]$ which we denote
\[  \bar{\sf N}[K] = \PR (L^{2}[K]). \]
We have embeddings, the last three of which are dense:
\[ K\subset {\sf N}_{0}[K]\subset {\sf W}[K]  \subset {\sf N}[K] \subset \bar{\sf N}[K]. \]
The considerations of \S\S 5.1-5.6 apply without change to an infinite degree algebraic number field
$\mathcal{K}/\Q$.

\vspace{4mm}

\begin{center}
\small{\bf 5.7 Completion of ${\sf N}_{0}[K]$ as a Pre-Hilbert Space}
\end{center}

\vspace{2mm} 

\noindent Let us consider what would have happened if
we had taken as our definition of $ {\sf N}[K]$ the completion ${\sf N}_{0}[K]\cong
{\sf Z}_{1}[K]$ with respect to the $L^{2}$ inner-product
on ${\sf Z}_{1}[K]$.  It is not difficult to see that 
${\sf Z}[K]$ is dense in $L^{2}[K]$ (e.g.\ every element of the basis $K$
is an $L^{2}$-limit of elements of ${\sf Z}[K]$)
hence ${\sf Z}_{1}[K]$ is dense as well.  Thus the
completion of ${\sf N}_{0}[K]$ is isometric to $L^{2}[K]$ with 
its vector space operations
shifted to $\dot{+}$ and $\odot$.

\vspace{4mm}

\begin{center}
\small{\bf 5.8 Unitary Galois Representation}
\end{center}

\vspace{2mm} 

\noindent If $L/K$ is a finite degree extension then we have an
inclusion $L^{2}[K]\subset L^{2}[L]$.
If the extension is Galois, then the Galois action defines a faithful
infinite dimensional representation
\[  \rho: {\rm Gal}(L/K) \longrightarrow {\sf U}(L^{2}[L])   \]
into the group of unitary or anti-unitary operators of $L^{2}[L]$.
Since this action is the identity on the closed subspace $L^{2}[K]$
it induces a faithful unitary representation of the quotient Hilbert space
$L^{2}[L]/L^{2}[K]$, also infinite-dimensional.

If ${\rm Gal}(L/K)$ is abelian, then all of its irreducible representations have dimension 1 by Schur's Lemma, so that there is a basis of $L^{2}[L]$ consisting of
eigenvectors for the representation $\rho$, each of which is determined by a character.  In general, we conjecture that this representation contains all irreducible representations.  We emphasize
that this representation is not the right regular one, for simple dimensional reasons.

We also get Galois representations in the case of infinite algebraic extensions and in particular we obtain a faithful infinite dimensional unitary representation
of ${\rm Gal}(\bar{\Q}/\Q)$.
In the abelian case of ${\rm Gal}(\bar{\Q}^{\rm ab}/\Q)\cong\hat{\Z}^{\times}$ it would be interesting to determine the spectral decomposition of this representation and whether there is a common spectral measure.

The projectivization of this action preserves ${\sf N}_{0}[L]$, ${\sf W}[L]$ and $ {\sf N}[L]$ (since it preserves trace zero) and
reduces to the usual Galois action on $L\subset {\sf N}[L]$.

 \pagebreak

\vspace{4mm}

\begin{center}
\small{\bf 5.9 Multiplication Operators}
\end{center}

\vspace{2mm} 

\noindent  The Cauchy and Dirichlet multiplication operators 
$M_{\oplus}^{f}$, $M_{\otimes}^{f}$ are in general only partially defined,
however if $f=\gamma \in K^{\ast}$ then ${\sf Dom}_{\oplus}(\gamma )= 
{\sf Dom}_{\otimes}(\gamma ) = L^{2}[K]$ and the actions
are unitary since they act by bilateral shift on the sequence
space $l^{2}[K]$:
\[  \langle  M_{\oplus}^{\gamma}(f),\; M_{\oplus}^{\gamma}(g)\rangle =
\sum_{\alpha} a_{\alpha-\gamma}\bar{b}_{\alpha-\gamma} =
 \langle  f, g\rangle =
 \sum_{\alpha} a_{\gamma^{-1}\alpha}\bar{b}_{\gamma^{-1}\alpha} = 
 \langle  M_{\otimes}^{\gamma}(f),\; M_{\otimes}^{\gamma}(g)\rangle.
\]
The algebra $\mathcal{A}_{\otimes}$ generated by the operators $M^{f}_{\otimes}$ for $f\in \C[K]$, is commutative, and 
 we shall see in \S 8 that when augmented by a projection it contains 
the classical Hecke algebras.

\section{Nonlinear Number Fields III: Hardy Grading}

To simplify the exposition we will in this section restrict attention
to totally real fields $K/\Q$.  When $K$ has complex places, a more
general discussion must be made using the notion of {\it complex signs}: this is found in \cite{GV}, \S\S 6,8.

\vspace{4mm}

\begin{center}
\small{\bf 6.1 Hyperbolization (\cite{GV}, \S 6)}
\end{center}

\vspace{2mm} 

\noindent For each place $\nu$ of $K$ consider the half-space 
$\HP_{\nu}=K_{\nu}\times i(0,\infty )$ 
with its hyperbolic metric and
write
\[ \HP_{K}\;\; =\ \;\;  \prod \HP_{\nu} 
\;\; \cong \;\;K_{\infty}\times (0,\infty)^{d} ,\]
a $d$-dimensional 
complex polydisk equipped with
the product riemannian metric.
Points of $\HP_{K}$ are denoted 
${\boldsymbol \tau} = (\tau_{\nu})$,  $\tau_{\nu}=x_{\nu}+it_{\nu}$.
Note that $\HP_{K}\subset \C_{K}$.

The subgroups $O_{K}$ and $K$ of
$K_{\infty}$, viewed as translations, extend by isometries to
$\HP_{K}$.  The quotients
\begin{equation}\label{hyperbolized}
\mathfrak{T}_{K} \;\; =\;\;  \HP_{K}/O_{K}\;\;\approx\;\; 
(\Delta^{\ast})^{d}, \quad\quad\quad\quad
\hat{\mathfrak{S}}_{K}  \;\; =\;\;  
\Big( \HP_{K}\times \A^{\sf fin}_{K}\Big) \Big/ K \;\; \approx\;\;
\SO_{K}\times (0,\infty )^{d}
\end{equation}
are referred to as the  \begin{color}{RoyalBlue} {\bf {\em hyperbolic Minkowski torus}}\end{color} and the  \begin{color}{RoyalBlue} {\bf {\em hyperbolic adele
class group}}\end{color} 
of $K$.  Here $\Delta^{\ast}$ denotes the punctured hyperbolic disk.   We have
$\hat{\mathfrak{S}}_{K} \subset \hat{\C}_{K}$, and we note
that
the adele class group $\SO_{K}$ may be identified with the Shilov boundary
of $\hat{\mathfrak{S}}_{K}$, see \cite{ArSi}, \cite{ArSi1}.
These objects may be thought of as model (solenoidal) cusps.
Hyperbolizations of solenoids first appeared in the construction of Sullivan's solenoid \cite{Gh}, \cite{Su}.  

In the case of an infinite field extension $\mathcal{K}$, 
one follows the prescription of the preceding paragraph using the hilbertian
torus and solenoid $\T_{\mathcal{K}}$
and $\SO_{\mathcal{K}}$.   

\vspace{4mm}

\begin{center}
\small{\bf 6.2 Hardy Space (\cite{GV}, \S 8)}
\end{center}

\vspace{2mm} 

\noindent A continuous function $F:\HSO_{K}\rightarrow\C$
is holomorphic if its restriction to each leaf is
holomorphic, or equivalently, 
if its restriction to $\HP_{K}$ is holomorphic. 

For each ${\bf t}\in  (0,\infty )^{d}$ let 
$\SO_{K}({\bf t})\subset\HSO_{K}$ be the subspace of points having 
extended coordinate
${\bf t}$ with respect to the decomposition (\ref{hyperbolized}).  Since $\SO_{K}({\bf t})\approx\SO_{K}$ we may put
on $\SO_{K}({\bf t})$ the unit mass Haar measure and define 
for $F,G :\HSO_{K}\rightarrow \C$ the pairing
\[ (F,G)_{\bf t}\;\;=\;\ \int_{\SO_{K}({\bf t})} 
F\,\bar{G}\; d\mu . \]
 The  \begin{color}{RoyalBlue} {\bf {\em Hardy space}}\end{color} associated to $K/\Q$ 
is the Hilbert space
\[ {\sf H}[K]\;\;=\;\; \Big\{ F:\HSO_{K}\rightarrow\C\;\Big| 
\;\;F\text{ is holomorphic and } \sup_{\bf t} (F,F)_{\bf t}<\infty\Big\} \]
with inner-product 
$(\cdot ,\cdot )=\sup_{\bf t}(\cdot ,\cdot )_{\bf t}$.  

Any 
$F\in {\sf H}[K]$
has an {\em a.e.}\ defined $L^{2}$ limit for
${\bf t}\rightarrow {\bf 0}$ \cite{Ka}, defining an element 
$\partial F \in L^{2}[K]$.
Using the Fourier development
available there, we may write the restriction $F|_{\HP_{K}}$ as a holomorphic
Puiseux series: 
\begin{equation}\label{boundary}  F|_{\HP_{K}}({\boldsymbol \tau}) =\sum a_{\alpha}
\exp (2\pi i {\sf Tr}(\alpha\cdot {\boldsymbol \tau} ))
= \sum_{\alpha}a_{\alpha} 
{\boldsymbol \eta}^{\alpha},
\end{equation}
where ${\boldsymbol \eta}^{\alpha}:= \exp (2\pi i {\sf Tr}(\alpha\cdot {\boldsymbol \tau} ))$.

The positive cone $K_{\infty}^{+}\subset K_{\infty}$ is the set of ${\bf x} = (x_{\nu})\in K_{\infty}$ for which $x_{\nu}> 0$
for all $\nu$.  
It is clear then that for the series of (\ref{boundary}) to define elements of ${\sf H}[K]$,
we must have
$a_{\alpha}=0$ for $\alpha\not\in K_{\infty}^{+}$.
Note that 
$\| F\|^{2}=\sum |a_{\alpha}|^{2}$ and hence the correspondence $F\mapsto \partial F$ yields an
isometric inclusion of Hilbert spaces
\[ {\sf H}[K]\;\hookrightarrow\; L^{2}[K] . \]

\vspace{4mm}

\begin{center}
\small{\bf 6.3 Hardy Grading for $K=\Q$ (\cite{GV}, \S 8.1)}
\end{center}

\vspace{2mm} 

\noindent 
 
Consider the case $K=\Q$.  
Denote 
by $\HP^{-}=\R\times i(-\infty ,0)$ the hyperbolic lower half-plane. Then every element $f=\sum a_{q}\xi^{q}\in L^{2}[\Q]$
determines a triple $(F_{-},F_{0}, F_{+})$
\[ F_{+}(\tau )\;\; =\;\; \sum_{q>0} a_{q}\eta^{q},\quad
\quad  F_{0}\;\;=\;\;  a_{0},\quad\quad 
F_{-}(\tau)\;\; =\;\; \sum_{q<0} a_{q}\bar{\eta}^{q}.
\]
The functions $F_{+}(\tau )$ and $F_{-}(\tau )$ are viewed as elements
of the Hardy spaces ${\sf H}_{+}[ \Q]= {\sf H}[\Q ]$ and 
${\sf H}_{-}[\Q ]$ = Hardy space of anti-holomorphic functions
on $\HSO_{\Q}$.  We thus obtain a graded Hilbert space
\[ {\sf H}_{\bullet}[\Q]= 
{\sf H}_{-}[\Q]\oplus\C \oplus{\sf H}_{+}[\Q]\cong L^{2}[\Q].
\]

\vspace{4mm}

\begin{center}
\small{\bf 6.4 $\theta$-Holomorficity  (\cite{GV}, \S 8.1)}
\end{center}

\vspace{2mm} 

\noindent To build from ${\sf H}[K]$ a holomorphic extension
of $K$, 
we must 
interpret all elements of $ L^{2}[K] $ 
as boundaries of holomorphic functions: that is,
one needs to introduce a generalized notion of ``anti-holomorphicity".  

Let $K/\Q$ be of degree $d$ and denote 
$\Theta_{K}=\{ -, +\}^{d}\cong (\Z/2\Z )^{d}$.
For each ${\boldsymbol \theta}\in
\Theta_{K}$,  define
\[ \HP_{K}^{{\boldsymbol \theta}}\;\;=\;\;\big\{ {\boldsymbol \tau}=(x_{\nu}+it_{\nu})\in \C_{K}
\;\big|\;\;
\big(\text{sign}(t_{\nu})) = {\boldsymbol \theta}
\big\} .\] 
Define ${\boldsymbol \theta}$-conjugation
${\sf c}_{{\boldsymbol \theta}}:\C_{K}\rightarrow\C_{K}$, where the coordinates
of  ${\sf c}_{{\boldsymbol \theta}}({\boldsymbol \tau})={\boldsymbol \tau}'$
satisfy
$x'_{\nu} +it'_{\nu} =x_{\nu} + \theta_{\nu}it_{\nu}$.
The conjugation maps are not holomorphic in ${\boldsymbol \tau}$ and satisfy
for all ${\boldsymbol \theta}, {\boldsymbol \theta}_{1},{\boldsymbol \theta}_{2}\in\Theta_{K}$:
\[  {\sf c}_{{\boldsymbol \theta}}(\HP_{K}) = \HP_{K}^{{\boldsymbol \theta}} \quad
\text{and}\quad
{\sf c}_{{\boldsymbol \theta}_{1}}\circ{\sf c}_{{\boldsymbol \theta}_{2}}
={\sf c}_{{\boldsymbol \theta}_{1}{\boldsymbol \theta}_{2}}.\]
A  \begin{color}{RoyalBlue} {\bf {\em $\theta$-holomorphic function}}\end{color} is one of the form $F\circ c_{\theta}$, where $F: \C_{K}\rightarrow \C$ is holomorphic.

\vspace{4mm}

\begin{center}
\small{\bf 6.5 Hardy Grading for $K\not=\Q$ (\cite{GV}, \S 8.1)}
\end{center}

\vspace{2mm} 

\noindent Denote 
by $K^{{\boldsymbol \theta}}$ those elements $\alpha\in K$ whose coordinates with
respect to the embedding $K\hookrightarrow K_{\infty}$ satisfy 
$\text{sign}(\alpha )= (\text{sign}(\alpha_{\nu})) = {\boldsymbol \theta}.$
Note that  
\[ K^{{\boldsymbol \theta}_{1}}\cdot K^{{\boldsymbol \theta}_{2}}\subset 
K^{{\boldsymbol \theta}_{1}{\boldsymbol \theta}_{2}}.\]   

Then every element $f=\sum a_{\alpha}{\boldsymbol \xi}^{\alpha}\in  L^{2}[K] $ determines
a $(2^{d}+1)$-tuple $(F_{{\boldsymbol \theta}}; F_{0})$, where for each 
${\boldsymbol \theta}\in\Theta_{K}$, $F_{{\boldsymbol \theta}}:\HSO_{K}\rightarrow\C$ is defined
as the unique extension to $\HSO_{K}$ of the following $\theta$-holomorphic function on $\HP_{K}$:
\[ F_{{\boldsymbol \theta}}({\boldsymbol \tau})\;\; =\;\; \sum_{\alpha\in K^{{\boldsymbol \theta}}} 
a_{\alpha} {\sf c}_{{\boldsymbol \theta}}({\boldsymbol \eta})^{\alpha} \]
where ${\sf c}_{{\boldsymbol \theta}}({\boldsymbol \eta})^{\alpha} = 
 \exp \big(2\pi i \, {\rm Tr}( \alpha \cdot {\sf c}_{{\boldsymbol \theta}}({\boldsymbol \tau})) \big)$.
The term 
$F_{0}$ is the constant function $a_{0}$. 
The Hardy space of $\theta$-holomorphic functions is 
denoted ${\sf H}_{{\boldsymbol \theta}}[K]$.
The space of $(2^{d}+1)$-tuples
is viewed as a graded Hilbert space:
\[ {\sf H}_{\bullet}[K] = 
\Big(\bigoplus_{{\boldsymbol \theta}}{\sf H}_{{\boldsymbol \theta}}[K]\Big)\oplus\C 
\cong  L^{2}[K] ,
\]
whose inner-product is the direct sum of the inner-products
on each of the summands.
We write ${\sf H}[K]$ for the summand of
${\sf H}_{\bullet}[K]$ corresponding to ${\boldsymbol \theta} =(+,\dots ,+)$
{\em i.e.}\ the Hardy space of functions holomorphic on $\HSO_{K}$
in the ordinary sense.

The projectivization of the $\theta$-Hardy spaces $\PR{\sf H}_{{\boldsymbol \theta}}[K]$ yields an arrangement of projective subspaces of $\PR {\sf H}_{\bullet}[K]$.  We
write ${\sf N}_{\theta}[K]=  {\sf N}[K]\cap \PR{\sf H}_{{\boldsymbol \theta}}[K]$
and denote by ${\sf N}_{\bullet}[K]$ the nonlinear number field ${\sf N}[K]$
equipped with the arrangement of ``Hardy homogeneous" subspaces ${\sf N}_{\theta}[K]$.

\vspace{4mm}

\begin{center}
\small{\bf 6.6 Dirichlet Homogeneity (\cite{GV}, \S 8.1)}
\end{center}

\vspace{2mm} 

\noindent The Cauchy and Dirichlet products are defined on ${\sf H}_{\bullet}[K]$
via boundary extensions: that is,  $F\otimes G$ is defined
to be the unique element of ${\sf H}_{\bullet}[K]$ whose
boundary is $\partial F\otimes\partial G$, provided the latter is defined.  
The Cauchy product does not 
respect the grading, but the Dirichlet product 
has the following graded decomposition law:
\[ \big(F\otimes G\big)_{{\boldsymbol \theta}} \;\;=\;\;
\sum_{{\boldsymbol \theta}={\boldsymbol \theta}_{1}{\boldsymbol \theta}_{2}}F_{{\boldsymbol \theta}_{1}}\otimes G_{{\boldsymbol \theta}_{2}} 
\quad\quad\quad\quad
\big( F\otimes G\big)_{0} \;\;=\;\;
 F(1) G(1)\;\; -\;\; F_{0} G_{0}.
\]

\vspace{4mm}

\begin{center}
\small{\bf 6.7 Infinite Degree Extensions (\cite{GV}, \S 8)}
\end{center}

\vspace{2mm} 

\noindent  Let $\mathcal{K}/\Q$ be of infinite degree and let  $\HSO_{\mathcal{K}}$ 
be the hyperbolized adele class group associated to $\SO_{\mathcal{K}}$.
A continuous function $F:\HSO_{\mathcal{K}}\rightarrow\C$
is holomorphic if its restriction $F|_{\HP_{\mathcal{K}}}$
is holomorphic i.e. if $F$ is holomorphic separately in each factor of the
polydisk decomposition $\HP_{\mathcal{K}}= \prod \HP_{\nu}$.

Define as before 
$\SO_{\mathcal{K}} ({\bf t})$ = the subset of $\HSO_{\mathcal{K}} $
having extended coordinate ${\bf t}\in (0,\infty )^{\infty}$.  The
proto compactification of $\SO_{\mathcal{K}} ({\bf t})$ is a lamination
$\hat{\SO}_{\mathcal{K}} ({\bf t})$
homeomorphic to the proto adele classe group $\hat{\SO}_{\mathcal{K}}$.  
If we let
\[ \hat{\HSO}_{\mathcal{K}}\;\;=\;\;\hat{\SO}_{\mathcal{K}}\times
(0,\infty )^{\infty}\;\;=\;\;\bigcup\; \hat{\SO}_{\mathcal{K}} ({\bf t}),\]
the Hardy
space ${\sf H}[\mathcal{K}]$ is defined as the space
of holomorphic functions $F:\HSO_{\mathcal{K}}\rightarrow\C$
having a continuous extension 
$\hat{F}:\hat{\HSO}_{\mathcal{K}} \rightarrow\C$
and for which the norm
\[ \| \hat{F}\|_{\bf t}^{2}\;\;=\;\;
\int_{\hat{\SO}_{\mathcal{K}}({\bf t}) } |\hat{F}|^{2}\,d\mu \] 
is uniformly bounded in ${\bf t}$ (where $d\mu$
is induced from the Haar measure $\mu$ on 
$\hat{\SO}_{\mathcal{K}}\approx\hat{\SO}_{\mathcal{K}}({\bf t})$).

As in the finite-dimensional case, ${\sf H}[\mathcal{K}]$
is a Hilbert space with respect to the supremum of the integration
pairings on each $\hat{\SO}_{\mathcal{K}} ({\bf t})$.
With this definition in place, the discussion in \S\S 6.4, 6.5 proceeds
without modification.  Note here that the sign group $\Theta_{\mathcal{K}}$
is a countable dense subgroup of the Cantor group $\{-,+\}^{\infty}$. 

 \pagebreak

\vspace{4mm}

\begin{center}
\small{\bf 6.8 Galois Sign Representation}
\end{center}

\vspace{2mm} 

\noindent If $L/K$ is Galois, then $\sigma\in {\rm Gal}(L/K)$ permutes
the grading of ${\sf H}_{\bullet}[L]$ and induces an automorphism of the sign group $\Theta_{K}$
that acts trivially on the subgroup $i_{L/K}(\Theta_{K})$ of  signs corresponding to elements in
$i_{L/K}(K_{\infty})$.  In other words, there is 
a canonical representation
\[  {\rm Gal}(L/K)\rightarrow {\rm Aut}(\Theta_{L} /i_{L/K}(\Theta_{K})). \]
This is true for the infinite degree field extension $\bar{\Q}/\Q$, in which case
we obtain a representation of ${\rm Gal}(\bar{\Q}/\Q)$ in the group of automorphisms of a dense subgroup of a Cantor group.

\vspace{4mm}

\begin{center}
\small{\bf 6.9 Abstract Nonlinear Number Field}
\end{center}

\vspace{2mm} 

\noindent  As we shall see, it is useful to consider the notion of nonlinear number field more broadly, as a mantle under which
several closely related spaces lie.  Thus by an \begin{color}{RoyalBlue} {\bf {\em abstract nonlinear number field}}\end{color} we will mean a graded topological partial double semigroup 
${\sf S}_{\bullet}=({\sf S}_{\bullet}, \oplus, \otimes )$ for which 
\begin{itemize}
\item[-] There exists a (possibly infinite degree) algebraic number field $K/\Q$
and a graded continuous monomorphism ${\sf N}_{0}[K] \hookrightarrow {\sf S}_{\bullet}$
with dense image.  Here, ${\sf N}_{0}[K]$ inherits the grading of ${\sf N}_{\bullet}[K]$.
\item[-] The element $1_{\oplus}\in {\sf S}_{\bullet}$ acts as a universal annihilator for the operation
$\otimes$: for all $f\in {\sf Dom}_{\otimes}(1_{\oplus})$, $f\otimes 1_{\oplus}=1_{\oplus}$.
\end{itemize}

All of the spaces ${\sf N}_{0}[K]$, ${\sf N}_{\bullet}[K]$, ${\sf W}_{\bullet}[K]$
and $\bar{\sf N}[K]$ are abstract nonlinear number fields.
 
 \section{Galois Theory}
 
We continue here to restrict attention
to totally real fields $K/\Q$ to simplify the exposition, noting
that the general case is discussed in \cite{GV}, \S 10.

\vspace{4mm}

\begin{center}
\small{\bf 7.1 Nonlinear Automorphisms (\cite{GV}, \S 10)}
\end{center}

\vspace{2mm} 

\noindent Let $K/\Q$ and 
equip ${\sf N}_{\bullet}[K]$ with the Fubini-Study metric
associated to the inner-product on ${\sf H}_{\bullet}[K]$.
A  \begin{color}{RoyalBlue} {\bf {\em nonlinear automorphism}}\end{color} 
$\Upsilon:{\sf N}_{\bullet}[K]\rightarrow
{\sf N}_{\bullet}[K]$
is the restriction of a Fubini-Study isometry of $\PR{\sf H}_{\bullet}[K]$
respecting the grading and the operations $\oplus$, $\otimes$
whenever they are defined:
that is, 
\[ \Upsilon ([F]\oplus [G])\;\; =\;\; \Upsilon ([F])\oplus \Upsilon ([G]),
\quad\quad\quad\quad
\Upsilon ([F]\otimes [G])\;\; =\;\; \Upsilon ([F])\otimes \Upsilon ([G])\] and
for some permutation $\iota$ of $\Theta_{K}$, 
$\Upsilon (\PR{\sf H}_{\boldsymbol \theta}[K]) = \PR{\sf H}_{\iota ({\boldsymbol \theta} )}[K]$ for all ${\boldsymbol \theta} \in \Theta_{K}$.
An identical definition is made in the case of an infinite degree algebraic extension.

\vspace{4mm}

\begin{center}
\small{\bf 7.2 Nonlinear Galois Groups (\cite{GV}, \S 10)}
\end{center}

\vspace{2mm} 

\noindent If $L/K$ is a finite field extension denote by 
\[{\sf Gal}\big({\sf N}_{\bullet}[L]/{\sf N}_{\bullet}[K]\big) \]
the group of automorphisms of 
${\sf N}_{\bullet}[K]$ acting trivially on
the sub nonlinear field ${\sf N}_{\bullet}[K]$, and
by 
\[{\sf Gal}\big({\sf N}_{\bullet}[K]\big/K\big)\] the group of 
automorphisms of ${\sf N}_{\bullet}[K]$
acting trivially on $K$.

\begin{theo}\label{GalIsOne}  
${\sf Gal}\big({\sf N}_{\bullet}[K]\big/K\big)\cong\{ 1\}$.
\end{theo}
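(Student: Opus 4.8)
The plan is to exploit the metric (Fubini--Study) structure through Wigner's Theorem, because the monomials $\xi^{\alpha}$, $\alpha\in K$, form an orthonormal basis of $L^{2}[K]$ and hence a \emph{discrete}, non-dense subset of $\PR L^{2}[K]$; moreover the double-semigroup operations by themselves only produce monomials from monomials (as $\xi^{\alpha}\oplus\xi^{\beta}=\xi^{\alpha+\beta}$ and $\xi^{\alpha}\otimes\xi^{\beta}=\xi^{\alpha\beta}$), so fixing $K$ pointwise cannot on its own propagate to a dense set. First I would invoke Wigner's Theorem to lift a nonlinear automorphism $\Upsilon$ that fixes $K$ to a unitary (or anti-unitary) operator $U$ on ${\sf H}_{\bullet}[K]\cong L^{2}[K]$, unique up to a phase. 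Since $\Upsilon$ fixes every class $[\xi^{\alpha}]$ and the $\xi^{\alpha}$ are an orthonormal basis, $U$ is diagonal in this basis: $U\xi^{\alpha}=\lambda_{\alpha}\xi^{\alpha}$ for unit scalars $\lambda_{\alpha}$.

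The heart of the argument is to force the phases $\lambda_{\alpha}$ to be constant by playing the two products against one another on two-term elements. Applying the $\otimes$-compatibility of $\Upsilon$ to $[\xi^{\alpha}+\xi^{\beta}]\otimes[\xi^{\gamma}]=[\xi^{\alpha\gamma}+\xi^{\beta\gamma}]$ and comparing the (uniquely determined) ratio of the two coefficients yields $\lambda_{\alpha\gamma}/\lambda_{\alpha}=\lambda_{\beta\gamma}/\lambda_{\beta}$ for all $\gamma\in K^{\ast}$; hence $\gamma\mapsto\lambda_{\alpha\gamma}/\lambda_{\alpha}$ is independent of $\alpha$ and defines a multiplicative character $\rho\colon K^{\ast}\to U(1)$ with $\lambda_{\gamma}=\lambda_{1}\,\rho(\gamma)$. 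The identical computation with $\oplus$ on $[\xi^{\alpha}+\xi^{\beta}]\oplus[\xi^{\gamma}]=[\xi^{\alpha+\gamma}+\xi^{\beta+\gamma}]$ produces an additive character $\sigma\colon(K,+)\to U(1)$ with $\lambda_{\gamma}=\lambda_{0}\,\sigma(\gamma)$ for all $\gamma\in K$.

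Next I would close the argument arithmetically. Writing $c=\lambda_{1}/\lambda_{0}$ gives $\sigma(\gamma)=c\,\rho(\gamma)$ on $K^{\ast}$. Feeding the additivity of $\sigma$ into the multiplicativity of $\rho$ collapses both: expanding $\sigma(2\gamma)=\sigma(\gamma)^{2}$ in two ways gives $\rho(2)\rho(\gamma)=c\,\rho(\gamma)^{2}$, so $\rho(2)=c\,\rho(\gamma)$ for every $\gamma\in K^{\ast}$, which forces $\rho$ to be constant, whence $\rho\equiv 1$, $c=1$, and $\sigma\equiv 1$. Therefore all $\lambda_{\alpha}$ coincide, $U$ is a scalar, and $\Upsilon$ is the identity.

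The main obstacle is the unitary/anti-unitary dichotomy produced by Wigner's Theorem. The phase analysis above applies verbatim to an anti-unitary lift and again forces the diagonal part to be scalar, but it leaves open the a priori possibility that $U$ is a scalar multiple of coefficientwise conjugation on the orthonormal basis $K$. This branch must be excluded by the holomorphic (oriented) structure underlying the Hardy grading ${\sf H}_{\bullet}[K]=\big(\bigoplus_{\theta}{\sf H}_{\theta}[K]\big)\oplus\C$: a grading-respecting Fubini--Study isometry that is compatible with the complex-linear structure of each holomorphic summand cannot be the anti-holomorphic involution, so the lift is genuinely complex-linear. Pinning down precisely how ``acting trivially on $K$'' together with grading-compatibility rules out the anti-unitary branch is where the real care is needed; everything else is the two-character computation sketched above.
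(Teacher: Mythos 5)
Your core argument is the paper's own: the paper likewise lifts $\sigma\in{\sf Gal}\big({\sf N}_{\bullet}[K]/K\big)$ via Wigner's Theorem to an (anti)unitary operator, notes that fixing $K$ forces a diagonal action $\tilde{\sigma}({\boldsymbol\eta}^{\alpha})=\lambda_{\alpha}{\boldsymbol\eta}^{\alpha}$ with $\lambda_{\alpha}\in U(1)$, and then argues that compatibility with $\oplus$ and $\otimes$ makes $\lambda$ simultaneously an additive and a multiplicative character, hence trivial. Your two-term test elements and the explicit bookkeeping of the projective normalization constants $c$, $\lambda_{0}$, $\lambda_{1}$ are a careful expansion of the step the paper compresses into the single display $\lambda_{\alpha_{1}+\alpha_{2}}=\lambda_{\alpha_{1}}\lambda_{\alpha_{2}}=\lambda_{\alpha_{1}\alpha_{2}}$; that portion of your write-up is correct and, if anything, more complete than the published argument.

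Where matters stand differently is exactly the point you flag: the anti-unitary branch. The paper's proof is silent about it --- it stops at ``$\lambda$ trivial,'' but a trivial-multiplier \emph{anti-unitary} lift is not the identity: it is coefficientwise conjugation $J:\sum a_{\alpha}{\boldsymbol\eta}^{\alpha}\mapsto\sum \bar{a}_{\alpha}{\boldsymbol\eta}^{\alpha}$, which is projectively nontrivial (look at $[{\boldsymbol\eta}^{\alpha}+i{\boldsymbol\eta}^{\beta}]$). Moreover, the escape route you propose --- that the holomorphic structure underlying the Hardy grading excludes this map --- does not work as stated: $J$ is the Schwarz reflection $F({\boldsymbol\tau})\mapsto\overline{F(-\bar{\boldsymbol\tau})}$, which carries holomorphic functions to holomorphic functions, preserves each graded summand ${\sf H}_{\boldsymbol\theta}[K]$ (it preserves Fourier supports), intertwines both products exactly (their structure constants are real), preserves the trace-zero locus $\widetilde{\sf Z}[K]$, and is a Fubini--Study isometry fixing every class $[{\boldsymbol\eta}^{\alpha}]$, $\alpha\in K$. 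Read literally against the definitions of \S 7.1--7.2, $J$ is therefore a nontrivial element of ${\sf Gal}\big({\sf N}_{\bullet}[K]/K\big)$, so no argument of the kind you sketch can dispose of the anti-unitary case; it has to be excluded at the level of the definition (e.g.\ by admitting only unitary-induced, i.e.\ holomorphic, Fubini--Study isometries as nonlinear automorphisms). Note that $J$ does not threaten Theorem \ref{GalIsGal}, since fixing ${\sf N}_{\bullet}[K]$ pointwise --- which contains classes with non-real coefficients --- rules it out; the weakness is specific to ``fixing $K$.'' In short, the gap you honestly flagged is real, but it is inherited from the paper itself, and the resolution you hoped for is not available.
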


This is proved as follows: 
if $\sigma\in {\sf Gal}\big({\sf N}_{\bullet}[K]\big/K\big)$ then
by Wigner's Theorem \cite{Wi}, $\sigma$ is the projectivization
of an (anti) unitary linear map 
$\tilde{\sigma}: {\sf H}_{\bullet}[K]\longrightarrow
{\sf H}_{\bullet}[K]$.
Since $\sigma$ fixes $K$, there exist multipliers $\lambda_{\alpha}\in U(1)$
with $\tilde{\sigma}({\boldsymbol \eta}^{\alpha}) = \lambda_{\alpha}\cdot {\boldsymbol \eta}^{\alpha}$.
But $\sigma$ respects the Cauchy and Dirichlet products, wherein we must have
that $\lambda$ is simultaneously an additive and multiplicative character: 
\[ \lambda_{\alpha_{1}+\alpha_{2}}=\lambda_{\alpha_{1}}\lambda_{\alpha_{2}}=\lambda_{\alpha_{1}\alpha_{2}},\]
possible only for $\lambda$ trivial.

In \cite{GV} the following fundamental theorem is proved:

\begin{theo}\label{GalIsGal} Let $L/K$ be
a Galois extension. Then 
\[ {\sf Gal}\big({\sf N}_{\bullet}[L]/
{\sf N}_{\bullet}[K]\big) 
\;\;\cong\;\; {\sf Gal}(L/K ).\] 
\end{theo}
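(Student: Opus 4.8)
The plan is to exhibit the isomorphism by producing an explicit homomorphism from the classical side and showing it is bijective. First I would define
\[ \Phi: {\rm Gal}(L/K) \longrightarrow {\sf Gal}\big({\sf N}_{\bullet}[L]/{\sf N}_{\bullet}[K]\big) \]
using the Galois action already assembled in the text: by \S 5.8 each $\sigma$ acts as a unitary (or anti-unitary) operator on ${\sf H}_{\bullet}[L]\cong L^{2}[L]$ which is the identity on the closed subspace $L^{2}[K]$; by \S 4.5 its projectivization respects $\oplus$ and $\otimes$; and by \S 6.8 it permutes the Hardy grading. Thus $\Phi(\sigma)$ is a genuine nonlinear automorphism fixing ${\sf N}_{\bullet}[K]$ pointwise, and $\Phi$ is a group homomorphism. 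Injectivity is immediate: $\Phi(\sigma)$ acts on the monomials by ${\boldsymbol \eta}^{\alpha}\mapsto{\boldsymbol \eta}^{\sigma(\alpha)}$, i.e.\ as the usual Galois action on $L\subset{\sf N}_{\bullet}[L]$, so $\Phi(\sigma)=1$ forces $\sigma=1$. Everything therefore reduces to surjectivity.

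For surjectivity I would start with an arbitrary $\Upsilon\in{\sf Gal}({\sf N}_{\bullet}[L]/{\sf N}_{\bullet}[K])$ and, exactly as in the proof of Theorem~\ref{GalIsOne}, invoke Wigner's Theorem \cite{Wi} to lift it to an (anti)unitary map $\tilde\Upsilon$ on ${\sf H}_{\bullet}[L]$. The whole argument then hinges on one claim, which I will call the \emph{monomial lemma}: $\tilde\Upsilon$ carries the monomial basis into itself up to phases,
\[ \tilde\Upsilon({\boldsymbol \eta}^{\alpha})=\lambda_{\alpha}\,{\boldsymbol \eta}^{\sigma(\alpha)}, \qquad \lambda_{\alpha}\in U(1), \]
for some bijection $\sigma$ of $L$. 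Granting this, respect for $\oplus$ and $\otimes$ forces $\sigma(\alpha+\beta)=\sigma(\alpha)+\sigma(\beta)$ and $\sigma(\alpha\beta)=\sigma(\alpha)\sigma(\beta)$, so $\sigma$ is a field automorphism of $L$; and since $\Upsilon$ fixes each ${\boldsymbol \eta}^{\alpha}$ with $\alpha\in K$ we get $\sigma|_{K}={\rm id}$, whence $\sigma\in{\rm Gal}(L/K)$. Finally I would form $\Upsilon':=\Phi(\sigma^{-1})\circ\Upsilon$, which now fixes every monomial $[{\boldsymbol \eta}^{\alpha}]$ and hence lies in ${\sf Gal}({\sf N}_{\bullet}[L]/L)$; by Theorem~\ref{GalIsOne} applied to $L$ this group is trivial, so $\Upsilon'=1$ and $\Upsilon=\Phi(\sigma)$.

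The hard part will be the monomial lemma, for which I would pass to the Pontryagin dual picture. Under the identification $L^{2}[L]\cong L^{2}(\SO_{L})$ the monomials are precisely the continuous characters of $\SO_{L}$, the Cauchy product $\oplus$ becomes pointwise multiplication of functions, multiplication by the monomial ${\boldsymbol \eta}^{\gamma}$ is the operator $M_{\oplus}^{\gamma}$, and the Dirichlet operator $M_{\otimes}^{\gamma}$ is the dilation $f\mapsto f(\gamma\,\cdot\,)$ coming from the $K$-vector space structure of $\SO_{L}$. Because $\Upsilon$ fixes $K$ and respects both products, it commutes projectively with $M_{\oplus}^{\gamma}$ and $M_{\otimes}^{\gamma}$ for every $\gamma\in K$. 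A unital isometry respecting pointwise multiplication on the dense subalgebra of bounded functions is a composition operator $f\mapsto f\circ\Psi$ for a measure-preserving transformation $\Psi$ of $\SO_{L}$; commutation with the scaling operators $M_{\otimes}^{\gamma}$ then constrains $\Psi$ to be a continuous group automorphism of $\SO_{L}$, and such a map sends characters to characters by Theorem~\ref{chartheo}. This is exactly the assertion that $\tilde\Upsilon$ permutes monomials, and it is the step I expect to require the most care --- in particular, upgrading the a priori only measure-theoretic $\Psi$ to an honest topological group automorphism, and dealing with the conjugate-linear (anti-unitary) case in parallel.
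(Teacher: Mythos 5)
Your overall architecture is sound, and it is essentially the only part that can be checked against this paper: the paper does not prove Theorem~\ref{GalIsGal} here, it quotes it from \cite{GV} (\S 10). The homomorphism $\Phi$ you assemble from \S\S 4.5, 5.8 and 6.8 is exactly the action the paper has prepared; injectivity is clear; and your reduction of surjectivity --- Wigner lift, monomial lemma, composition with $\Phi(\sigma^{-1})$, then Theorem~\ref{GalIsOne} applied to $L$ --- is correct as a reduction. (One small item to patch along the way: a Wigner lift respects $\oplus$ and $\otimes$ only up to scalars, so before treating $\tilde\Upsilon$ as genuinely multiplicative you must normalize $\tilde\Upsilon(1_{\oplus})=1_{\oplus}$ and run a short cocycle argument to kill the remaining constants; this is routine.)

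The genuine gap is inside the monomial lemma, at exactly the step you flag. After passing to the composition-operator picture you retain only three facts about $\Psi$: it preserves Haar measure, it commutes with the dilations $x\mapsto \gamma x$ for $\gamma\in K^{\times}$ (from commutation with $M_{\otimes}^{\gamma}$), and it preserves the fibers of ${\rm Tr}_{L/K}:\SO_{L}\rightarrow \SO_{K}$ (from fixing $L^{2}[K]$ pointwise). These facts do not, by any standard argument, force $\Psi$ to be a continuous group automorphism. Concretely, take $L=\Q(\sqrt{2})$, $K=\Q$, and identify $\SO_{L}\cong \SO_{\Q}\times\SO_{\Q}$ so that ${\rm Tr}(x,y)=2x$ and $\Q^{\times}$ dilates diagonally. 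Every map $\Psi(x,y)=(x,\,y+c(x))$ with $c:\SO_{\Q}\rightarrow\SO_{\Q}$ measurable and $\Q^{\times}$-equivariant, $c(\gamma x)=\gamma c(x)$ a.e., is measure preserving, fiber preserving and dilation equivariant, hence passes every test you impose; yet a short Fourier computation shows its composition operator carries characters to characters only when $c$ is a.e.\ of the algebraic form $x\mapsto qx$, $q\in\Q$. So your claim amounts to the assertion that every measurable $\Q^{\times}$-equivariant self-map of the solenoid is algebraic --- a measure-rigidity statement for a higher-rank dilation action, in the Furstenberg--Rudolph--Einsiedler--Lindenstrauss circle of problems. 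It may well be true, but it is certainly not a routine consequence of functional analysis, and you give no argument for it; at worst it is false and the step collapses. The information you discarded when extracting only commutation relations --- that $\tilde\Upsilon$ respects $\otimes$ on its whole domain, and that it permutes the $\Theta_{L}$-grading --- is precisely what would have to be re-injected to pin down $\Psi$, and is presumably what the proof in \cite{GV} exploits. As written, the crux of your proof is an unproven rigidity conjecture rather than a lemma.
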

The results in this section are also true without modification for infinite
degree algebraic extensions.

\vspace{4mm}

\begin{center}
\small{\bf 7.3 Cauchy and Dirichlet Flows  (\cite{GV}, \S 10)}
\end{center}

\vspace{2mm} 

\noindent In what follows it will be convenient to work with the abstract nonlinear number field
$\bar{\sf N}_{\bullet}[K]=\PR {\sf H}_{\bullet}[K]$.
Assume now that $K/\Q$ is of finite degree and
consider each of the operations
$\oplus$ and $\otimes$ separately.  
Let ${\sf Gal}_{\oplus}\big(\bar{\sf N}_{\bullet}[K]/K\big)$
denote the group of isometries of $\bar{\sf N}_{\bullet}[K]$ that
fix $K$, permute the grading and are homomorphic with respect
to $\oplus$ only.  We similarly define
${\sf Gal}_{\otimes}\big(\bar{\sf N}_{\bullet}[K]/K\big)$.

Denote by ${\sf U}\big({\sf H}_{\bullet}[K]\big)$
the group of unitary or anti-unitary operators of ${\sf H}_{\bullet}[K]$.
The action of ${\bf r}\in K_{\infty}$ by 
translation in $\HP_{K}$, 
${\boldsymbol \tau}\mapsto {\boldsymbol \tau}+{\bf r}$, induces an action
on ${\sf H}_{\bullet}[K]$ by
\[  \Phi_{\bf r}(F)\;\; =\;\;   
\sum a_{\alpha}\exp (2\pi i{\rm Tr}( \alpha {\bf r}) )\boldsymbol{\eta}^{\alpha} \]
for $F=\sum_{\alpha}a_{\alpha}\boldsymbol{\eta}^{\alpha}$, yielding a faithful representation $\Phi :K_{\infty}\longrightarrow 
{\sf U}\big({\sf H}_{\bullet}[K]\big)$.
In particular, for each ${\bf r}$,  $\Phi_{\bf r}$ is an isometry that acts trivially on the grading i.e.\  $\Phi_{\bf r}({\sf H}_{\theta})={\sf H}_{\theta}$
for $\theta\in\Theta_{K}$.  It is straightforward to check that $\Phi$
induces a monomorphism 
\[ \Phi : K_{\infty}\;\hookrightarrow \;
{\sf Gal}_{\oplus}\big(\bar{\sf N}_{\bullet}[K]/K\big)
. \]
We call $\Phi$ the  \begin{color}{RoyalBlue} {\bf {\em Cauchy flow}}\end{color}.

\begin{note} The representation $\Phi$ {\it does not} preserve the subspace $\widetilde{\sf Z}[K]$ of trace zero elements, hence does not preserve ${\sf N}[K]$.   It may viewed as a flow {\it with singularities} on ${\sf N}[K]$.
\end{note}

We also define a  $d$-dimensional flow on
${\sf H}_{\bullet}[K]$ respecting $\otimes$ as follows.
For a vector
${\bf x}\in K_{\infty}$, write $\log |{\bf x}|=(\log |x_{\nu}|)$.
Then for
$F=\sum_{\alpha}a_{\alpha}{\boldsymbol \eta}^{\alpha}$ we define
\[  \Psi_{\bf r} (F)\;\; =\;\; 
\sum_{\alpha\in K}a_{\alpha}
\exp \big( 2\pi i{\rm Tr}({\bf r} \log | \alpha | )\big) 
{\boldsymbol \eta}^{\alpha}\;\; =\;\; \sum_{\alpha\in K}a_{\alpha}
 | \alpha |^{2\pi i {\bf r}}  
{\boldsymbol \eta}^{\alpha} .\]
This defines a faithful representation
$\Psi :K_{\infty}\longrightarrow   {\sf U}\big({\sf H}_{\bullet}[K]\big)$ inducing
a monomorphism
\[ \Psi :K_{\infty}\;\hookrightarrow \;
{\sf Gal}_{\otimes}\big(\bar{\sf N}_{\bullet}[K]/K\big)
. \]
We call $\Psi$ the  \begin{color}{RoyalBlue} {\bf {\em Dirichlet flow}}\end{color}.

\vspace{4mm}

\begin{center}
\small{\bf 7.4 Parallels Between the Dirichlet Flow and Berry's Hypothetical Riemann Flow}
\end{center}

Let $K=\Q$.  Notice then that
the periodic orbits of the Dirichlet flow have period $1/\log |q|$ for any $q\in\Q$.   There
is also a time-reversal symmetry: a conjugation $T\circ \Psi_{r}\circ T^{-1}=\Psi_{-r}$ defined by the Dirichlet automorphism
\[ f=\sum a_{q}\xi^{q}\longmapsto T(f)= \sum a_{q}\xi^{q^{-1}} . \] 

The subspace $\bar{\sf N}(\N)\subset \bar{\sf N}(\Q )$ of classes corresponding to $\N$-indexed
Fourier series is invariant by the Dirichlet
flow and the periodic orbits have length $1/\log n$, $n\in\N$.  However
the operator $T$ defined above does not preserve $\bar{\sf N}(\N)$, 
and we conjecture
that the system $(\bar{\sf N}(\N), \Psi_{r})$ does not have time-reversal symmetry.  This system 
thus bears some similarity with the hypothetical Riemann flow of Berry \cite{Ber}, \cite{Con}.

\vspace{2mm} 

\vspace{4mm}

\begin{center}
\small{\bf 7.5 Idele Class Group as  a Galois Group  (\cite{GV}, \S 10)}
\end{center}

\vspace{2mm}

Recall that by abelian class field theory \cite{Ne}, the  \begin{color}{RoyalBlue} {\bf {\em idele class group}}\end{color} ${\sf C}_{\Q}=\A_{\Q}^{\times}/\Q^{\times}$ of $\Q$ may be
identified with $\R^{\times}_{+}\times {\rm Gal}(\bar{\Q}^{\rm ab}/\Q)$.  
As a corollary to the discussion in \S\S 7.2, 7.3, there are monomorphisms
\[ {\sf C}_{\Q}\;\hookrightarrow\; 
{\sf Gal}_{\oplus}\big(\bar{\sf N}_{\bullet}[\Q^{\rm ab}]/\Q\big) ,
\quad\quad\quad\quad
{\sf C}_{\Q }\;\hookrightarrow\; 
{\sf Gal}_{\otimes}\big(\bar{\sf N}_{\bullet}[\Q^{\rm ab}]/\Q\big) .
\]
The extension $\Q^{\rm ab}/\Q$ is complex, so the grading 
of ${\sf N}_{\bullet}[\Q^{\rm ab}]$ is the
complex grading of \cite{GV} \S\S 6, 8,  which we have not discussed in these notes.

 \section{Nonlinear Number Fields, L-Functions and Modular Forms}

 \vspace{4mm}

\begin{center}
\small{\bf 8.1 Riemann Zeta-Function}
\end{center}

\vspace{2mm} 

The translated Riemann zeta function $\zeta (s+1)$ is
holomorphic in the right half plane $\{s|\; {\rm Re} (s)>0\}$.  
The substitution $n^{-s}\mapsto \eta^{n}$ 
transforms\footnote{The substitution $ \eta^{n} \mapsto n^{-s}$ is the Mellin transform
modulo Gamma factors:
\[  n^{-s} = (2\pi)^{s} \Gamma (s)^{-1} {\rm Mellin}(\eta^{n}(iy))   \]
where $\eta (iy)=\exp (-2\pi y)$.} $\zeta (s+1)$ to
\[ {\rm Li}_{1}(\eta ) = \sum_{n\geq 1} (1/n)\eta^{n}  = -\log (1- \eta ).  \]
 In general the translates $\zeta (s+ s_{0} )$ with 
${\rm Re}(s_{0} )\geq 1$  yield the polylogarithms ${\rm Li}_{s_{0}} (\eta )\in{\sf H}[\N]\subset {\sf H}[\Q]$.  

We remark that the translated zeta functions
$\zeta (s+s_{0} )$ enjoy all of the usual properties present in $\zeta (s)$: meromorphic continuation, Euler product, functional equation and boundedness in vertical strips.  Of course, the Riemann Hypothesis is true of $\zeta (s)$ if and only if its true for $\zeta (s+s_{0}  )$, in the sense that all non-trivial zeros lie on a common vertical line.

Notice that ${\rm Li}_{1}(\eta )$, defined for $\eta\in\D^{\ast}$, has a multi-valued analytic continuation to $\eta\in\C-1$, with no zeros in $\D^{\ast}$, and only one at $\eta =0$ after continuation.  Moreover, the pole at $s=0$ transforms to a {\it logarithmic singularity} at $\eta = 1$. 
This example indicates that the ``inverse Mellin transform" 
$n^{-s}\mapsto \eta^{n}$
does not transport the analytic continuation and the zeros of $\zeta (s+1)$  to
their obvious counterparts in the $\eta$-plane.

\vspace{4mm}

\begin{center}
\small{\bf 8.2  L-Functions}
\end{center}

\vspace{2mm}

 Consider an L-function written written in the form of a Dirichlet series
\[  L(s) = \sum a_{n}n^{-s}.\]
We assume that $L(s)$ is holomorphic in the right half plane ${\rm Re} (s)>0$, and if it is not, we shift by a suitable constant so that it is.    Assume also that the sequence of coefficients $\{ a_{n}\}$ belongs to $l^{2}$.  If we define $f_{L}$
via the substitution $n^{-s}\mapsto \eta^{n}$,  then $f_{L}(\eta)\in {\sf H}[\N]$.
When $L_{1}\cdot L_{2}$ has $l^{2}$ coefficients then 
 \[  f_{L_{1}\cdot L_{2}} =  f_{L_{1}}\otimes f_{L_{2}},  \]
so that the correspondence $f\mapsto f_{L}\in {\sf N}[\N]$ takes
the algebra of such L-functions to the Dirichlet algebra of ${\sf N}[\N]$.


 \vspace{4mm}

\begin{center}
\small{\bf 8.3 Modular Forms and the Hecke Algebra}
\end{center}

\vspace{2mm}

 \noindent For simplicity we confine ourselves to the case of modular forms
 for $\Gamma (1)={\rm SL}_{2}(\Z )$. Let $S_{k}$ be the space of cusp forms of weight $k$.  As in the case of the zeta function, we will renormalize
 the Fourier coefficients of $f\in S_{k}$ 
 in order to obtain an element of ${\sf H}[\N]$:
 that is, we replace $a_{n}$ by $\lambda_{n}=a_{n}/n^{k/2}$.
This corresponds to translating\footnote{The translation by $(k-1)/2$ is often employed to shift the axis of symmetry of the functional equation to ${\rm Re}(s)=1/2$.} the associated L-function 
 by $k/2$.

\begin{prop}   $S_{k}\subset
{\sf H}[\N ]$.
\end{prop}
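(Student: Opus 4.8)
The plan is to read membership directly off the characterization of the Hardy space in \S 6.2, where an element of ${\sf H}[\Q]$ is determined by a holomorphic Puiseux series whose Fourier coefficients are supported on the positive cone and are square-summable, with $\|F\|^{2}=\sum|a_{\alpha}|^{2}$. A cusp form $f=\sum_{n\geq 1}a_{n}\eta^{n}\in S_{k}$ (with $\eta=\exp(2\pi i\tau)$) produces, after the renormalization $\lambda_{n}=a_{n}/n^{k/2}$ of \S 8.3, the series $\tilde{f}=\sum_{n\geq 1}\lambda_{n}\eta^{n}$. To prove $\tilde{f}\in{\sf H}[\N]$ it suffices to establish three things: that $\tilde{f}$ is holomorphic on $\HSO_{\Q}$; that its Fourier support lies in $\N\subset\Q_{\infty}^{+}$, so that it falls inside the $\N$-indexed part of the holomorphic summand ${\sf H}[\Q]$ corresponding to the sign $\theta=(+)$; and that $\sum_{n\geq 1}|\lambda_{n}|^{2}<\infty$. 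Since $S_{k}$ is finite-dimensional it is enough to treat a single $f$, say a basis of Hecke eigenforms.

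The first two points are routine. By Hecke's classical estimate $a_{n}=O(n^{k/2})$ we have $\lambda_{n}=O(1)$, so the power series $\sum\lambda_{n}\eta^{n}$ converges on the disk $|\eta|<1$ and hence defines a holomorphic function on the leaf $\HP_{\Q}=\HP$; by the uniqueness of extension recorded in \S 6.5 this yields a holomorphic $\tilde{f}$ on all of $\HSO_{\Q}$. The cusp condition $a_{0}=0$, together with holomorphicity, removes the constant term and every negative frequency, so the Fourier support of $\tilde{f}$ is exactly $\{n\in\N\}$. This identifies $\tilde{f}$ as an $\N$-indexed holomorphic series supported in the positive cone $\Q_{\infty}^{+}$, i.e.\ a candidate element of ${\sf H}[\N]$ whose Hardy norm is governed entirely by the third point.

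I expect the square-summability to be the main obstacle. Hecke's bound is not enough here, since it delivers only $\lambda_{n}=O(1)$; the correct analytic input is a mean-square estimate, and I would invoke the Rankin--Selberg bound $\sum_{n\leq X}|a_{n}|^{2}\ll X^{k}$ and pass to $\sum|a_{n}|^{2}/n^{k}=\sum|\lambda_{n}|^{2}$ by Abel summation. The \emph{critical} feature is that the exponent $k/2$ is precisely the threshold value: it is the shift that symmetrizes the completed L-function of $f$, and it places $\sum|\lambda_{n}|^{2}$ exactly at the borderline governed by the second moment $\sum_{n\leq X}|a_{n}|^{2}\asymp X^{k}$. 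Controlling this borderline sum, rather than merely bounding $\lambda_{n}$ termwise, is therefore the heart of the matter, and is where any genuine work in the proof will reside.
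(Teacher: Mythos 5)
Your reduction to three points is sound, and the first two (leafwise holomorphy, Fourier support in $\N\subset K_{\infty}^{+}$) are routine, handled the same way in the paper's proof. The problem is the third point, which you rightly single out as the heart of the matter but then leave unexecuted: you assert that the Rankin--Selberg bound $\sum_{n\leq X}|a_{n}|^{2}\ll X^{k}$ plus Abel summation will control $\sum|\lambda_{n}|^{2}$, but if you actually carry out that computation it proves the opposite. Rankin--Selberg gives an asymptotic, not merely an upper bound: for any nonzero $f\in S_{k}$ one has $\sum_{n\leq X}|a_{n}|^{2}\sim c_{f}X^{k}$ with $c_{f}>0$ (proportional to the Petersson norm of $f$); equivalently, the Rankin--Selberg Dirichlet series $\sum|a_{n}|^{2}n^{-s}$ has a simple pole at $s=k$. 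Abel summation then yields
\[
\sum_{n\leq X}|\lambda_{n}|^{2}\;=\;\sum_{n\leq X}\frac{|a_{n}|^{2}}{n^{k}}\;\sim\; c_{f}\,k\,\log X\;\longrightarrow\;\infty ,
\]
so the renormalized sequence $\{\lambda_{n}\}=\{a_{n}/n^{k/2}\}$ is \emph{not} square-summable for any nonzero cusp form. The exponent $k/2$ sits on the divergent side of the threshold: $\sum|a_{n}|^{2}n^{-2s_{0}}$ converges precisely when $s_{0}>k/2$. Your plan therefore cannot be completed; performed honestly, it refutes the proposition in the normalization of \S 8.3 rather than proving it.

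For comparison, the paper's own proof stumbles at exactly the same spot by a different (termwise) route: it reduces to normalized Hecke eigenforms and invokes Deligne's theorem to get $|\lambda_{n}|^{2}\leq Cn^{-1+\epsilon}$, then concludes $\{\lambda_{n}\}\in l^{2}$. That final deduction is invalid, since $\sum n^{-1+\epsilon}$ diverges for $\epsilon>0$, and by the Rankin--Selberg computation above no sharper estimate can rescue it. So your diagnosis of where the genuine difficulty lies is more accurate than the published argument; what is mistaken is your expectation that the borderline sum can be controlled at all. The statement becomes true -- and then Deligne's termwise bound does prove it -- only if one renormalizes by $n^{-s_{0}}$ with $s_{0}$ strictly greater than $k/2$, since then $|\lambda_{n}|^{2}\ll n^{k-1+\epsilon-2s_{0}}$ is summable; at the critical exponent $s_{0}=k/2$ used in \S 8.3, membership in ${\sf H}[\N]$ fails.
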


\begin{proof}  The Fourier expression for $f$
gives a well-defined holomorphic function on $\mathfrak{T}_{\Q}\cong \Delta^{\ast}$.
We must show that the sequence of Fourier coefficients $\{\lambda_{n}\}$
belong to $l^{2}$.  It suffices to show this for a normalized Hecke eigenform.  Then by Deligne's Theorem (Ramanujan-Petersson conjecture) \cite{De}, there is a constant $C>0$ such that
\[  |\lambda_{n}|^{2} = n^{-k} |a_{n}|^{2} \leq  Cn^{-k}n^{k-1+\epsilon} \leq C n^{-1+\epsilon}. \]
Thus $f\in {\sf H}[\N ]\subset  {\sf H}[\Q ]$.
\end{proof}

Denote by
 \[  {\rm pr}_{\Q/\Z}: {\sf H}[\Q]\longrightarrow {\sf H}[\Z]\]
 the orthogonal projection operator.
 Consider for each prime $p$ the Puiseux polynomial
\[   t_{p}(\eta ) = \eta^{p} + p^{k-1}\eta^{1/p}\in  {\sf H}[\Q].  \]

\begin{prop}  The operator 
\[  T_{p }=  {\rm pr}_{\Q/\Z}\circ M_{\otimes}^{t_{p}}  \]
coincides with the usual Hecke operator when restricted to $S_{k}$.
\end{prop}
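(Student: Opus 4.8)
The plan is to carry out the defining computation of the Dirichlet product at the level of Fourier (Puiseux) coefficients and to compare the resulting sequence, after projection, with the coefficient sequence produced by the classical Hecke operator.

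First I would dispose of domain issues. Since $t_p = \eta^p + p^{k-1}\eta^{1/p}$ has only finitely many nonzero coefficients it lies in $\C[\Q]$, so by the remark closing \S 5.3 we have ${\sf Dom}_{\otimes}(t_p) = L^{2}[\Q]$ and $M_{\otimes}^{t_p}$ is globally defined; in particular it is defined on every $f\in S_k\subset {\sf H}[\N]$. Writing such an $f$ in its normalized Puiseux form $\tilde f = \sum_{n\geq 1}\lambda_n\eta^n$ with $\lambda_n = a_n/n^{k/2}$ (as in \S 8.3), the task reduces to identifying ${\rm pr}_{\Q/\Z}\big(t_p\otimes\tilde f\big)$. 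Because $\otimes$ is linear in each argument, it suffices to track coefficients, with no reduction to eigenforms required.

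Next I would compute the convolution term by term. By the definition of $\otimes$ (\S\S 4.1, 5.3), the coefficient of $\eta^{\alpha}$ in $t_p\otimes\tilde f$ is $\sum_{\alpha_1\alpha_2 = \alpha}(t_p)_{\alpha_1}\lambda_{\alpha_2}$, and as $t_p$ is supported on $\{p, 1/p\}$ only two terms survive: the factor $\alpha_1 = p$ (coefficient $1$) forces $\alpha_2 = \alpha/p$, while $\alpha_1 = 1/p$ (coefficient $p^{k-1}$) forces $\alpha_2 = \alpha p$. Hence this coefficient equals $\lambda_{\alpha/p} + p^{k-1}\lambda_{\alpha p}$. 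Applying ${\rm pr}_{\Q/\Z}$ discards every mode $\eta^{\alpha}$ with $\alpha\notin\Z$; since $\tilde f$ is $\N$-indexed the surviving support is exactly $\N$, and the projected element is $\sum_{m\geq 1}\big(\lambda_{m/p} + p^{k-1}\lambda_{mp}\big)\eta^{m}$, with the convention $\lambda_{m/p}=0$ when $p\nmid m$.

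Finally I would match this against the classical Hecke operator, which on Fourier coefficients acts by $b_n = a_{np} + p^{k-1}a_{n/p}$, re-expressed in the normalized variables via $a_n = n^{k/2}\lambda_n$. The crux of the argument, and the step I expect to be the main obstacle, is the reconciliation of the powers of $p$: one must verify that the contribution of the $\eta^{p}$ term of $t_p$ unwinds to $a_{np}$ and that of the $p^{k-1}\eta^{1/p}$ term to $p^{k-1}a_{n/p}$, so that the two sequences coincide index by index. This is precisely the point at which the exponent carried by $t_p$ is calibrated against the weight-$k$ renormalization, and the bookkeeping of the factors $p^{\pm k/2}$ introduced by $\lambda_n = a_n/n^{k/2}$ must be done with care; it is here that the correct placement of the $p^{k-1}$ weight between the two monomials of $t_p$ is pinned down. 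Once the coefficients are seen to agree, the classical fact that $T_p$ preserves $S_k$ guarantees that ${\rm pr}_{\Q/\Z}\big(t_p\otimes\tilde f\big)$ is again the cusp form $T_p f$, which establishes the asserted coincidence of $T_p = {\rm pr}_{\Q/\Z}\circ M_{\otimes}^{t_p}$ with the usual Hecke operator on $S_k$.
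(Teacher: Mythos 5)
Your first two paragraphs reproduce the paper's own proof: the paper likewise computes $t_{p}\otimes f=\sum_{n>0}a_{n}\bigl(\eta^{np}+p^{k-1}\eta^{n/p}\bigr)$, projects to get $\sum_{m>0}\bigl(\bar{a}_{m/p}+p^{k-1}\bar{a}_{mp}\bigr)\eta^{m}$, and then concludes by citing Lemma 5.1 of \cite{Kow}. The gap is in your third paragraph, which is exactly where the content lies: you name the ``reconciliation of the powers of $p$'' as the main obstacle and then assume it done (``once the coefficients are seen to agree''). In fact the matching you anticipate is backwards relative to your own second-paragraph computation: since $\eta^{p}\otimes\eta^{n}=\eta^{np}$ places the coefficient $\lambda_{n}$ at index $np$, the $\eta^{p}$ monomial of $t_{p}$ produces the $\lambda_{m/p}$ (i.e.\ $a_{m/p}$-type) contribution, and the $p^{k-1}\eta^{1/p}$ monomial produces the $\lambda_{mp}$-type one; so the operator puts the weight $p^{k-1}$ on the $a_{mp}$ term, whereas the classical formula $b_{m}=a_{mp}+p^{k-1}a_{m/p}$ puts it on $a_{m/p}$. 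If you actually do the bookkeeping with the normalization of \S 8.3, $\lambda_{n}=a_{n}/n^{k/2}$, the classical operator transports to $\nu_{m}=p^{k/2}\lambda_{mp}+p^{k/2-1}\lambda_{m/p}$, while ${\rm pr}_{\Q/\Z}\circ M_{\otimes}^{t_{p}}$ gives $\mu_{m}=p^{k-1}\lambda_{mp}+\lambda_{m/p}$. The discrepancy is a relative factor between the two terms, not an overall scalar, so passing to projective classes does not absorb it: $\mu$ and $\nu$ are proportional only when $k=2$. Thus the verification you defer is not routine bookkeeping; as literally set up, it fails.

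What makes the proposition hold is a different calibration of the renormalization: with $\lambda_{n}=a_{n}/n^{k-1}$ the usual Hecke action becomes exactly $\lambda_{m}\mapsto\lambda_{m/p}+p^{k-1}\lambda_{mp}$, which is precisely ${\rm pr}_{\Q/\Z}\circ M_{\otimes}^{t_{p}}$ with the stated $t_{p}=\eta^{p}+p^{k-1}\eta^{1/p}$ (and Deligne's bound still gives $\lambda_{n}\ll n^{-(k-1)/2+\epsilon}$, hence $l^{2}$, so the inclusion $S_{k}\subset{\sf H}[\N]$ is unaffected); alternatively, one keeps $a_{n}/n^{k/2}$ and replaces $t_{p}$ by $\eta^{p}+p\,\eta^{1/p}$. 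The paper's proof sidesteps this point by appealing to Lemma 5.1 of \cite{Kow}, so it too leaves the normalization implicit; but a self-contained proof must pin the calibration down explicitly, and your sketch, at its announced crux, pins it down incorrectly.
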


\begin{proof}  If $f\in S_{k}$ then 
\[   T_{p}(f) =  {\rm pr}_{\Q/\Z}\bigg(\sum_{n>0} a_{n}\big(\eta^{np} +   
p^{k-1}\eta^{n/p}\big) \bigg) = 
\sum_{m>0}\big(\bar{a}_{m/p} + p^{k-1}\bar{a}_{mp}\big)\eta^{m}\]
where $\bar{a}_{x}=a_{x} $ if $x\in\N$ and $0$ otherwise.  But this is precisely
the Fourier expansion of the image of $f$ by the usual Hecke operator, see Lemma 5.1 of \cite{Kow}.
\end{proof}

Thus the Hecke algebra is a subalgebra of the Dirichlet algebra of ${\sf H}[\Q]$
with an added projection.  
  
  \section{Aperiodic Unit Class Group}
  
  The ideas contained in this section are something of an afterthought, 
  and we do not claim to present them in definitive form.  We thank Jim Cogdell for taking the time
  to clarify a few points about Artin L-functions.
  
 \vspace{4mm}

\begin{center}
\small{\bf 9.1 Aperiodic Unit Class Group}
\end{center}

\vspace{2mm} 

\noindent Recall that $W[\Q ]$ is the Wiener field algebra of $\Q$ (see \S 5.5).  Denote by $D_{+}[\Q]$ the group of Dirichlet units in ${\sf H}[\Q]\cap W[\Q]$:
the group of $f\in {\sf H}[\Q]\cap W[\Q]$ such that there exists $g\in {\sf H}[\Q]\cap W[\Q]$ with $f\otimes g=1_{\otimes}$.
Let ${\sf D}_{+}[\Q]$ be the projectivization of $D_{+}[\Q]$.  Note that $\Q_{+}^{\times}\subset {\sf D}_{+}[\Q]$.   We call
the quotient group
\[  \mathfrak{D}[\Q] :=  {\sf D}_{+}[\Q] /\Q_{+}^{\times}\]
the \begin{color}{RoyalBlue} {\bf {\em unit class group}}\end{color}.  

Let ${\sf D}[\N]$ = the subgroup of ${\sf D}_{+}(\Q)$ of units of the form $f(\eta ) =   \sum_{n>0} a_{n} \eta^{n}$.  Equivalently \cite{Ap},  ${\sf D}[\N]$ consists of those $\N$-indexed
power series for which $a_{1}\not=0$.  For example ${\rm Li}_{2}(\eta )$ (the nonlinear integer associated to $\zeta (s+2)$, see \S 8.1) defines an element of ${\sf D}[\N]$ since it has
inverse $g= \sum_{n>0} b_{n} \eta^{n} $
where $b_{n}=\mu(n)/n^{2}$ and $\mu (n)$ is the M\"{o}bius function.  Note that
 ${\sf D}[\N]\cap \Q^{\times}_{+}=1_{\otimes}$ so that
${\sf D}[\N]$ embeds in $ \mathfrak{D}[\Q] $; we denote its image 
by $ \mathfrak{D}[\N] $.

Let $\mathfrak{D}^{\star}[\N] $ denote the subgroup of projective classes of series containing
a representative whose coefficients $a_{n}$ are completely multiplicative.  Consider the subgroup
$\mathfrak{P}^{\star}[\N ]\subset \mathfrak{D}^{\star}[\N] $
generated by elements of the form $f_{p}=\sum a_{p^{k}}\eta^{p^{k}}$.  Elements of 
$\mathfrak{P}^{\star}[\N ]$ have power series expansions involving powers $\eta^{n}$ for which $n$ is divisible by only a finite set of primes.  In particular, $\mathfrak{P}^{\star}[\N ]$ consists of all elements of 
$\mathfrak{D}^{\star}[\N] $ lying on a periodic orbit of the Dirichlet flow.
The quotient 
\[  \mathfrak{D}_{\rm ap}^{\star}[\N] =\mathfrak{D}^{\star}[\N] /\mathfrak{P}^{\star}[\N ] \]
is called the \begin{color}{RoyalBlue} {\bf {\em aperiodic unit class group}}\end{color}.

\vspace{4mm}

\begin{center}
\small{\bf 9.2 Action by the Dual Idele Class Group and the Galois Character Group}
\end{center}

\vspace{2mm} 

\noindent 

The Dirichlet flow $\Psi$ stabilizes $\Q^{\times}_{+}$, acts by automorphisms of ${\sf D}_{+}[\Q]$ and so induces an action of 
$ \mathfrak{D}[\Q]$ which fixes set-wise the subgroup $ \mathfrak{D}[\N] $.  Moreover it stabilizes the subgroups $\mathfrak{P}^{\star}[\N ]\subset \mathfrak{D}^{\star}[\N] $ and so descends to a 1-parameter group of automorphisms of $\mathfrak{D}^{\star}_{\rm ap}[\N ]$.

Consider the Galois character group ${\rm Char}({\rm Gal}(\bar{\Q}/\Q))\cong{\rm Char}(\hat{\Z}^{\times})$.  For each element $\bar{\chi}\in {\rm Char}(\hat{\Z}^{\times})$ we associate a (primitive) Dirichlet character $\chi :\Z\rightarrow {\sf U}(1)$ induced by $\bar{\chi}$, which we recall is completely multiplicative.   Denote by $\chi_{n}$ the value of $\chi$ at $n$. 
We define an endomorphism $R_{\chi}$ of $ W[\N]$ by
\begin{equation}\label{DirCharAct}
  R_{\chi}(f) = \sum_{n>0} \chi_{n}a_{n}\eta^{n}. 
  \end{equation}
Observe first that $R_{\chi}$ yields a self-map of $W[\N]$ since $\| R_{\chi}( f) \| \leq
\sum |a_{n}|=\| f\| $ so that $ R_{\chi}( f)\in W[\N]$.
Moreover $R_{\chi}$ is a Dirichlet endomorphism:
\begin{eqnarray*}  R_{\chi}(f\otimes g) & = &   \sum_{n>0} \chi_{n}\bigg(\sum_{n_{1}n_{2}=n}
a_{n_{1}} b_{n_{2}}\bigg)\eta^{n}  \\
 & = &  
 \sum_{n>0} \bigg(\sum_{n_{1}n_{2}=n}
\chi_{n_{1}}a_{n_{1}} \chi_{n_{2}}b_{n_{2}}\bigg)\eta^{n} \\
& = &  R_{\chi}( f)\otimes  R_{\chi}( g).
\end{eqnarray*}
Note that $\chi$ has nontrivial kernel: for example, if $\chi$
has conductor $N$
and $f=\eta + \eta^{N}$ then $R_{\chi}( f) = \eta = 1_{\otimes}$
since $\chi_{N}=0$.
The endomorphism $R_{\chi}$ descends to one of $\mathfrak{D}(\N )$ preserving
the subgroups $\mathfrak{P}^{\star}[\N ]\subset \mathfrak{D}^{\star}[\N] $ and induces an {\it automorphism} of $\mathfrak{D}^{\star}_{\rm ap}[\N ]$ (since ${\rm Ker}(R_{\chi})\subset
\mathfrak{P}^{\star}[\N ]$).

\begin{theo} The map $\bar{\chi}\mapsto R_{\chi}$ induces a well-defined monomorphism of 
${\rm Char}(\hat{\Z}^{\times})$
into ${\rm Aut}(\mathfrak{D}^{\star}_{\rm ap}[\N ])$ taking the product of characters to composition
of automorphisms.
\end{theo}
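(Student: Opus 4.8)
The plan is to establish three things in turn: that each $R_{\chi}$ descends to a genuine automorphism of $\mathfrak{D}^{\star}_{\rm ap}[\N]$ (this is essentially carried out in the discussion preceding the statement), that the assignment $\bar{\chi}\mapsto R_{\chi}$ is multiplicative, and that it is injective. The correspondence $\bar{\chi}\mapsto\chi$ between ${\rm Char}(\hat{\Z}^{\times})$ and primitive Dirichlet characters is the standard bijection, and by the computation just above the statement, $R_{\chi}$ is a Dirichlet endomorphism of $W[\N]$ with ${\rm Ker}(R_{\chi})\subset\mathfrak{P}^{\star}[\N]$, so it induces a well-defined automorphism $\tilde{R}_{\chi}$ of the aperiodic quotient. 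What remains is the genuine content of the theorem.

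For the homomorphism property I would compute directly on Fourier coefficients: $R_{\chi_{1}}\circ R_{\chi_{2}}$ multiplies the $n$-th coefficient by $(\chi_{1})_{n}(\chi_{2})_{n}$. The subtle point, and the main obstacle, is that this pointwise product is the value of the (generally imprimitive) character $\chi_{1}\chi_{2}$ modulo the lcm of the two conductors, whereas $R_{\chi_{1}\chi_{2}}$ uses the primitive character attached to $\bar{\chi}_{1}\bar{\chi}_{2}$. These two characters agree at every prime except the finitely many dividing the product of conductors. Consequently, on a completely multiplicative $f$ the two completely multiplicative outputs $R_{\chi_{1}}R_{\chi_{2}}(f)$ and $R_{\chi_{1}\chi_{2}}(f)$ differ only in their Euler factors at this finite set of primes; their ratio is therefore supported on integers divisible by only finitely many primes, and so lies in $\mathfrak{P}^{\star}[\N]$. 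Thus $\tilde{R}_{\chi_{1}}\circ\tilde{R}_{\chi_{2}}=\tilde{R}_{\chi_{1}\chi_{2}}$ in ${\rm Aut}(\mathfrak{D}^{\star}_{\rm ap}[\N])$, even though the identity fails on the nose in $W[\N]$. This is precisely the reason the aperiodic quotient is the right object: passage to $\mathfrak{D}^{\star}[\N]/\mathfrak{P}^{\star}[\N]$ annihilates finite-prime discrepancies, and here I would lean on the characterization of $\mathfrak{P}^{\star}[\N]$ as the finite-prime-supported (equivalently, Dirichlet-periodic) elements.

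For injectivity I would test against the single completely multiplicative unit $f_{\zeta}=\sum_{n\geq 1}\eta^{n}$ (coefficients $a_{n}\equiv 1$, the nonlinear integer attached to $\zeta$), whose class lies in $\mathfrak{D}^{\star}[\N]$. Its image $R_{\chi}(f_{\zeta})=\sum_{n\geq 1}\chi_{n}\eta^{n}$ is the element attached to $L(s,\chi)$, a completely multiplicative unit with prime values $a_{p}=\chi_{p}$. Two completely multiplicative classes coincide in $\mathfrak{D}^{\star}_{\rm ap}[\N]$ exactly when their prime-value sequences agree at all but finitely many primes, so $\tilde{R}_{\chi}$ fixes $[f_{\zeta}]$ iff $\chi_{p}=1$ for cofinitely many $p$. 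If $\chi$ is nontrivial it takes a value $\neq 1$ on some reduced residue class modulo its conductor, and by Dirichlet's theorem on primes in arithmetic progressions there are infinitely many primes $p$ with $\chi_{p}\neq 1$; hence $\tilde{R}_{\chi}[f_{\zeta}]\neq[f_{\zeta}]$ and $\tilde{R}_{\chi}\neq{\rm id}$. Therefore $\bar{\chi}\mapsto\tilde{R}_{\chi}$ has trivial kernel, and combined with the preceding two paragraphs this gives the desired monomorphism. The only real difficulty is the homomorphism step, where the failure of primitivity to survive multiplication must be reconciled through the aperiodic quotient; well-definedness and injectivity are comparatively direct once the role of $\mathfrak{P}^{\star}[\N]$ is in hand.
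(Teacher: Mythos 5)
Your treatment of well-definedness and of the homomorphism property is sound, and it is at bottom the paper's own mechanism in different dress. The paper proceeds through two ingredients: the distributivity identity $R_{\chi\ast\psi}(f)=R_{\chi}(f)\otimes R_{\psi}(f)$ for completely multiplicative $f$, and the factorization $\chi=\chi'\ast\zeta_{p_{1}}\ast\cdots\ast\zeta_{p_{m}}$ relating the primitive character to any imprimitive character inducing the same $\bar{\chi}$ (here $\zeta_{p}$ is the indicator of $p$-powers); since $R_{\zeta_{p}}(f)\in\mathfrak{P}^{\star}[\N]$, the choice of inducing character is immaterial in the aperiodic quotient, and then $R_{\chi\psi}=R_{\chi}\circ R_{\psi}$ holds \emph{exactly} as operators, with $\chi\psi$ the pointwise (generally imprimitive) product. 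Your route --- compare the two completely multiplicative outputs prime by prime, note they differ only at the finitely many primes dividing the conductors, and conclude that their Dirichlet ratio is supported on integers with prime factors in that finite set, hence lies in $\mathfrak{P}^{\star}[\N]$ --- reads the same discrepancy off on the unit-group side instead of through the convolution identity. What your version buys is that you never need the explicit $\zeta_{p}$-factorization; what it costs is that the closure facts you lean on (a multiplicative unit supported on finitely many primes factors into its local units, which are exactly the generators $f_{p}$ of $\mathfrak{P}^{\star}[\N]$) must be spelled out, and doing so amounts to the same local-factor bookkeeping. A genuine improvement on your side: you prove injectivity. The paper's proof establishes only well-definedness and multiplicativity and never addresses the kernel at all, even though the theorem asserts a monomorphism; your idea --- test on a zeta-type completely multiplicative unit and invoke Dirichlet's theorem on primes in arithmetic progressions --- is the right way to close that gap.

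However, your test element is not in the group: $f_{\zeta}=\sum_{n\geq 1}\eta^{n}$ has coefficients $a_{n}\equiv 1$, which are not absolutely summable and not even square summable, so $f_{\zeta}\notin W[\N]$ and indeed $f_{\zeta}\notin L^{2}[\Q]$. Its ``class'' therefore does not lie in $\mathfrak{D}^{\star}[\N]$, which by construction sits inside the projectivization of ${\sf H}[\Q]\cap W[\Q]$. This is precisely why the paper works only with shifted zeta functions: $\zeta(s)$ itself has no avatar in this framework, only the translates $\zeta(s+s_{0})$, ${\rm Re}(s_{0})\geq 1$, do. The repair is immediate: test instead against ${\rm Li}_{2}(\eta)=\sum_{n\geq 1}n^{-2}\eta^{n}$, the paper's own example of an element of ${\sf D}[\N]$ (attached to $\zeta(s+2)$, with Dirichlet inverse given by $\mu(n)n^{-2}$), whose coefficients $n^{-2}$ are completely multiplicative and summable. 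Then $R_{\chi}({\rm Li}_{2})=\sum\chi_{n}n^{-2}\eta^{n}$, and equality of classes in $\mathfrak{D}^{\star}_{\rm ap}[\N]$ forces $\chi_{p}p^{-2}=p^{-2}$, i.e.\ $\chi_{p}=1$, for all but finitely many $p$; your appeal to Dirichlet's theorem then yields the contradiction for nontrivial primitive $\chi$. With that one substitution your injectivity argument is correct, and your proposal as a whole proves somewhat more than the paper's own proof does.
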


\begin{proof}  Denote the Dirichlet product\footnote{Usually called the {\it Dirichlet convolution}
 \cite{Ap}.} of arithmetic functions $\chi$, $\psi$  by $\chi\ast\psi$.  Let $R_{\chi\ast\psi}$ be defined as in (\ref{DirCharAct}) using
the arithmetic function $\chi\ast\psi$.   Then for any $f\in\mathfrak{D}^{\star}(\N )$
\begin{equation}\label{convisprod} R_{\chi\ast\psi}(f) =R_{\chi}(f)\otimes R_{\psi}(f) , \end{equation}
where the complete multiplicativity of $f$ is essential in the verification.
Now consider the arithmetic function $\zeta_{p}$ defined $\zeta_{p}(n)=1$ if $n=p^{k}$ for some
$k\geq 0$
and $0$ otherwise\footnote{This is just the arithmetic function associated to the local zeta function
$(1-p^{-s} )^{-1}$.}.  If $\chi'$ is a Dirichlet character induced by $\bar{\chi}$
which is not equal to the primitive character $\chi$, then there are primes $p_{i}$ (possibly occurring with multiplicity) such that (see \S 3 of \cite{Kow1})
\begin{equation}\label{nonprim}
 \chi = \chi'\ast  \zeta_{p_{1}} \ast \cdots \ast  \zeta_{p_{m}}.  
 \end{equation}
It follows by (\ref{convisprod}) that in $ \mathfrak{D}^{\star}_{\rm ap}[\N]$
\begin{equation}\label{welldefdir}
 R_{\chi}(f) = R_{\chi'}(f)  
 \end{equation}
since $R_{\zeta_{p}}(f)\in  \mathfrak{P}^{\star}[\N ]$ for all $p$.  In particular, the map $\bar{\chi}\mapsto R_{\chi}$
is independent of the choice of Dirichlet character $\chi$ induced by $\bar{\chi}$.
While the product $\chi\psi$ of primitive Dirichlet characters induced by $\bar{\chi}$, $\bar{\psi}$
need not be primitive, it is nevertheless a Dirichlet character induced by the product $\bar{\chi}\bar{\psi}$, thus it follows by (\ref{nonprim}) and (\ref{welldefdir}) that $R_{\chi\psi}=R_{\chi}\circ R_{\psi}$.
\end{proof}

Let ${\sf C}_{\Q}$ be the idele class group of $\Q$. Since $(\R ,+)\cong
\R^{\times}_{+}\cong {\rm Char}(\R^{\times}_{+} )$ and 
$ {\rm Char}({\sf C}_{\Q})\cong {\rm Char}(\hat{\Z}^{\times})\times {\rm Char}(\R^{\times}_{+})$,
we may define an action of $ {\rm Char}({\sf C}_{\Q})$ by automorphisms of 
$ \mathfrak{D}^{\star}_{\rm ap}[\N]$:
that is, an embedding $ {\rm Char}({\sf C}_{\Q})
\subset {\rm Aut}(  \mathfrak{D}^{\star}_{\rm ap}[\N])$.
Or by Pontryagin duality
\begin{equation}\label{cqcontend}
  {\sf C}_{\Q}\supset {\rm Char}( {\rm Aut}(\mathfrak{D}^{\star}_{\rm ap}[\N] )) . 
  \end{equation}
Question: Is $ {\sf C}_{\Q}={\rm Char}({\rm Aut}(  \mathfrak{D}^{\star}_{\rm ap}[\N])$?

\vspace{4mm}

\begin{center}
\small{\bf 9.3 Relatively Prime Dirichlet Product}
\end{center}

\vspace{2mm}

L-functions with Euler products have coefficients that are multiplicative but not necessarily completely multiplicative.  To find actions by Artin or automorphic L-functions, we must consider a variation of the Dirichlet product available for $\N$-indexed series in which the summation law is restricted to factorizations
$n_{1}n_{2}=n$ for which $(n_{1},n_{2})=1$.  Thus for
$f,g\in {\mathfrak D}(\N )$, we define the \begin{color}{RoyalBlue} {\bf {\em relatively prime Dirichlet product}}\end{color} by
\[f\check{\otimes} g =\sum_{n}\bigg(\check{\sum}_{
n_{1}n_{2}=n} a_{n_{1}}
b_{n_{2}} \bigg) \eta^{n}\]
where $\check{\sum}_{n_{1}n_{2}=n}$ means we
sum over pairs $n_{1},n_{2}$ which are relatively prime.

This product is certainly commutative. It is also associative:
\[ f\check{\otimes} (g\check{\otimes}h)=  \sum_{n}\bigg(\check{\sum}_{
n_{1}n_{2}=n} a_{n_{1}}
\bigg(\check{\sum}_{
n_{21}n_{22}=n_{2}} b_{n_{21}}
c_{n_{22}} \bigg) \bigg) \eta^{n} =
 \sum_{n}\bigg(\check{\sum}_{
n_{1}n_{2}n_{3}=n} a_{n_{1}}b_{n_{2}}
c_{n_{3}}\bigg)\eta^{n} 
 \]
where the last checked sum is over triples which are mutually relatively prime.
It follows then that $f\check{\otimes} (g\check{\otimes}h)=(f\check{\otimes} g)\check{\otimes}h$.
The formula for the inverse has the same recursive form as that
of the ordinary Dirichlet product \cite{Ap}, but where we replace $\sum$ by $\check{\sum}$ i.e. 
\[  f^{-1} = \frac{1}{f(1)}\eta +\sum_{n>1}b_{n}\eta^{n},\quad b_{n}= 
-\check{\sum}_{d|n,\; d<n} a_{n/d}b_{d}. \]
In particular, any $f\in {\mathfrak D}(\N)$ for which $a_{1}\not=0$ has a $\check{\otimes}$-inverse, so set-wise, the group of $\check{\otimes}$ units coincides with 
${\mathfrak D}(\N)$.
The  \begin{color}{RoyalBlue} {\bf {\em  relatively prime unit class group}}\end{color} $\check{\mathfrak{D}}(\N )$
is thus defined as the set ${\mathfrak D}(\N )$ with the operation $\check{\otimes}$.  

As in \S 9.1 we define $\check{\mathfrak{D}}^{\star}(\N )$ to be the subgroup of elements
which are {\it multiplicative} (not necessarily completely multiplicative), $\check{\mathfrak{P}}^{\star}[\N ]\subset \check{\mathfrak{D}}^{\star}(\N )$ the subgroup of Dirichlet periodic elements and let 
$\check{\mathfrak{D}}_{\rm ap}^{\star}(\N )=\check{\mathfrak{D}}^{\star}(\N )/\check{\mathfrak{P}}
^{\star}[\N ]$.

\vspace{4mm}

\begin{center}
\small{\bf 9.4 Action by Categories of Galois Representations, Automorphic Representations}
\end{center}

\vspace{2mm}

Let $\rho : {\rm Gal}(K/\Q )\longrightarrow {\rm GL}_{n}(\C )   $ be a linear Galois representation.  
Recall that for each prime $p$ the Euler factor of the associated Artin L-function $L(\rho, s)$ is 
\[ L_{p}(\rho, s) = {\det} (1-p^{-s}\rho (\sigma_{p} )  )^{-1}  \]
where $\sigma_{p}\in G_{p}/I_{p}$ is the Frobenius element, and where $G_{p}$ is the decomposition group and $I_{p}$ is the inertia group.  (If $p$ ramifies in $\rho$ then
the determinant is calculated in $(\C^{n})^{I_{p}}$ = subspace of fixed vectors of $\rho (I_{p})$.)
 Then the arithmetic function $\chi_{\rho}$ defined
 \[ L_{p}(\rho, s) = \sum_{n>0} \chi_{\rho ,n}n^{-s}  \]
 is multiplicative but not in general completely multiplicative.
 By definition of $\check{\otimes}$ it follows that 
 \begin{equation}\label{repaction}
  R_{\rho}(f) = \sum_{n>0} \chi_{\rho,n}a_{n}\eta^{n} 
  \end{equation}
defines an endomorphism of $\check{\mathfrak{D}}(\N )$ which descends to one
of $\check{\mathfrak{D}}_{\rm ap}^{\star}(\N )$.

Actually, one should consider $R_{\rho}$ as only being partially-defined, as it is not clear that the result
$R_{\rho}(f)$
is in the projective class of an element in $W[K]$, since we do not have a good understanding of the order of magnitude of the coefficients 
$\chi_{\rho,n}$.  Leaving aside this issue, one expects that (some variant of) (\ref{repaction}) will define a representation (i.e. a functor) of monoidal categories 
\[ R:{\rm Rep}({\rm Gal}(\bar{\Q}/{\Q}))\longrightarrow {\rm End}(\check{\mathfrak{D}}_{\rm ap}^{\star}(\N )).\]  Here, we regard ${\rm End}(\check{\mathfrak{D}}_{\rm ap}^{\star}(\N ))$  
as a category in which the morphisms are semi-conjugacies of endomorphisms.
Let us denote the ring operations of 
${\rm End}(\check{\mathfrak{D}}_{\rm ap}(\N ))$ by $\circ$ = composition (product) and $\boxplus$ (sum)
defined
\[ (S\boxplus S')(f) := S(f)\otimes S'(f) \quad (\text{Dirichlet product}).
\]
In this context, one has  $R_{\rho\oplus\sigma}  = R_{\rho} \boxplus R_{\sigma} $
since $\chi_{\rho\oplus\sigma}=\chi_{\rho}\ast\chi_{\sigma}$, see \S\S 2.1, 3.4 of \cite{deSh}.
Moreover in cases where the global Langlands correspondence has been verified we have \cite{Co}
that as endomorphisms of $\check{\mathfrak{D}}_{\rm ap}^{\star}(\N )$ (i.e. modulo $\mathfrak{P}^{\star}[\N ]$):
\begin{equation}\label{tensorproduct}   R_{\rho\otimes\sigma}  = R_{\rho} \circ R_{\sigma} .
\end{equation}
Similarly, a (variant of) (\ref{repaction}) for an automorphic representation
$\pi$ of ${\rm GL}_{n}(\A_{\Q} )$ should give rise to a corresponding representation of the category
of admissible cuspidal automorphic representations.

In the end, one anticipates being able to apply Tanaka duality to discover the analogue of (\ref{cqcontend}), though it is not clear to us yet what would play the role
of ${\rm Char}({\rm Aut}(\mathfrak{D}_{\rm ap}^{\star}(\N )))$, since there is no obvious analogue of the ``forgetful functor" from
${\rm End}(\check{\mathfrak{D}}_{\rm ap}^{\star}(\N ))$ to the category ${\rm Vect}_{\C}$ of finite dimensional vector spaces.

\end{document}